\newcommand{\Th}{^{\rm th}}
\title{Chebyshev polynomials, moment matching, and optimal estimation of the unseen}
\author{Yihong Wu and Pengkun Yang\thanks{The authors are with
        the Department of Electrical and Computer Engineering and the Coordinated Science Lab, University of Illinois at Urbana-Champaign, Urbana, IL, \texttt{\{yihongwu,pyang14\}@illinois.edu}.}}
\date{\today}
\begin{document}

\maketitle

\begin{abstract}
    We consider the problem of estimating the support size of a discrete distribution whose minimum non-zero mass is at least $ \frac{1}{k}$. Under the independent sampling model, we show that the sample complexity, i.e., the minimal sample size to achieve an additive error of $\epsilon k$ with probability at least 0.1 is within universal constant factors of  $ \frac{k}{\log k}\log^2\frac{1}{\epsilon} $, which improves the state-of-the-art result of $ \frac{k}{\epsilon^2 \log k} $ in \cite{VV13}. Similar characterization of the minimax risk is also obtained. Our procedure is a linear estimator based on the Chebyshev polynomial and its approximation-theoretic properties, which can be evaluated in $O(n+\log^2 k)$ time and attains the sample complexity within a factor of six asymptotically. The superiority of the proposed estimator in terms of accuracy, computational efficiency and scalability is demonstrated in a variety of synthetic and real datasets. 
\end{abstract}

\section{Introduction}
\label{sec:intro}

\subsection{Model}
\label{sec:model}

Estimating the support size of a distribution from data is a classical problem in statistics with widespread applications.
For example, a major task for ecologists is to estimate the number of species \cite{FCW43} from field experiments; linguists are interested in estimating the vocabulary size of Shakespeare based on his complete works \cite{McNeil73,ET76,TE87}; in population genetics it is of great interest to estimate the number of different alleles in a population \cite{HW01}.
Estimating the support size is equivalent to estimating the number of unseen symbols, which is particularly challenging when the sample size is relatively small compared to the total population size, since a significant portion of the population are never observed in the data.

We adopt the following statistical model \cite{BO79,RRSS09}.
Let $ P $ be a discrete distribution over some countable alphabet. Without loss of generality, we assume the alphabet is $ \naturals $ and denote $ P=(p_1,p_2,\dots) $.
Given $ n $ \iid samples $ X\triangleq(X_1,\dots,X_n) $ drawn from $P$, the goal is to estimate the support size
\begin{equation}
    S(P)\triangleq \sum_{i}\indc{p_i>0}.
    \label{eq:supp}
\end{equation}
To estimate the distribution or its functionals, a sufficient statistic is the \emph{histogram} of the samples, denoted by $ N=(N_1,N_2,\dots) $ and 
\begin{equation}
N_i=\sum_{j=1}^{n}\indc{X_j=i}.         
        \label{eq:histogram}
\end{equation}
Therefore $ N$ has a multinomial distribution with parameter $n$ and $P$.
For estimating the support size (or other permutation-invariant functional of the distribution), 
the \emph{fingerprints} form a sufficient statistic which is a further summary of the histogram $N$, which are defined as
\begin{equation}
h_j = \sum_i \indc{N_i = j},    
        \label{eq:fingerprint}
\end{equation}
i.e., the number of items that appear exactly $j$ times.

It is clear that unless we impose further assumptions on the distribution $P$, it is impossible to estimate $S(P)$ within a given accuracy, for otherwise there can be arbitrarily many masses in the support of $P$ that never occur in the samples with high probability and the risk for estimating $S(P)$ is obviously infinite. To prevent the triviality, a conventional assumption \cite{RRSS09} is to impose a lower bound on the non-zero probabilities. Therefore we restrict our attention to the parameter space $ \calD_k $, which consists of all probability distributions on $ \naturals $ whose minimum non-zero mass is at least $ \frac{1}{k} $; consequently $ S(P)\le k $ for any $P \in \calD_k$.
The decision-theoretic fundamental limit of this problem is given by the \emph{minimax risk}:
\begin{equation}
    R^*(k,n)\triangleq \inf_{\hat S}\sup_{P\in\calD_k}\Expect[\ell(\hat S,S)],
    \label{eq:risk}
\end{equation}
where the loss function $ \ell(\hat S,S)\triangleq (\frac{\hat S- S}{k})^2 $ is the normalized mean squared error (MSE) and $\hat S$ is an integer-valued estimator measurable with respect to the samples $X_1,\dots,X_n \iiddistr P$.

\subsection{Main results}
\label{sec:main}
Our first main result is the following characterization of the minimax risk:
\begin{theorem}
    \label{thm:main}
    For all $k,n \geq 2$, 
    \begin{equation}
R^*(k,n) = \exp\pth{-\Theta\pth{\sqrt{\frac{n\log k}{k}}\vee \frac{n}{k} \vee 1}}.
        \label{eq:main}
    \end{equation}
    Furthermore, if $ \frac{k}{\log k}\ll n\ll k\log k $, as $ k\rightarrow \infty $,
    \begin{equation}
    \label{eq:main-asymp}
        \exp\pth{-(\sqrt{2}e+o(1))\sqrt{\frac{n\log k}{k}}}\le R^*(k,n)\le \exp\pth{-(1.579+o(1))\sqrt{\frac{n\log k}{k}}}
    \end{equation}
\end{theorem}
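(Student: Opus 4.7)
I would view support estimation as estimation of the separable functional $S(P)=\sum_i f(p_i)$ with $f(x)=\mathbf{1}\{x>0\}$, and construct a linear estimator from a best polynomial approximation of $f$ on the relevant interval; the extremal polynomial is a translated Chebyshev polynomial, which both motivates the name and controls the exponent in \eqref{eq:main}. First I would Poissonize: replace $n$ by an independent $\mathrm{Poi}(n)$ sample size so that the histogram entries $N_i$ become independent $\mathrm{Poi}(np_i)$; standard comparison arguments show this changes the minimax risk by negligible amounts and decouples the per-symbol analysis.

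\textbf{Upper bound.} Choose a threshold $\tau\asymp (\log k)/n$ and a degree $L\asymp\log k$, and let $P_L(x)=\sum_{j=1}^{L}a_j x^j$ be the polynomial of degree $L$ with $P_L(0)=0$ minimizing $\sup_{x\in[1/k,\tau]}|P_L(x)-1|$. This polynomial is the appropriately translated Chebyshev polynomial of the first kind on $[1/k,\tau]$, with uniform error of order $1/T_L\!\left(\frac{\tau+1/k}{\tau-1/k}\right)\asymp\exp(-cL/\sqrt{\tau k})$. Using the Poisson factorial-moment identity $\mathbb{E}[N_i(N_i-1)\cdots(N_i-j+1)]=(np_i)^j$, the unbiased per-symbol estimator $\tilde P_L(N_i)\triangleq\sum_{j=1}^{L}a_j n^{-j}N_i(N_i-1)\cdots(N_i-j+1)$ recovers $P_L(p_i)$ in expectation. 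I would then form $\hat S=\sum_i\bigl[\tilde P_L(N_i)\,\mathbf{1}\{N_i\le c_0\log k\}+\mathbf{1}\{N_i>c_0\log k\}\bigr]$, so that the polynomial handles the ``small $p_i$'' regime while the plug-in $\mathbf{1}\{N_i>0\}$ handles the ``large $p_i$'' regime (whose bias $e^{-np_i}$ is sub-polynomial). Summed over at most $k$ symbols, the total bias is bounded by $k$ times the polynomial approximation error, and balancing the degree with the threshold yields the $\exp(-\Theta(\sqrt{n\log k/k}))$ rate.

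\textbf{Lower bound.} I would apply Le Cam's two-point method with priors built by moment matching. Concretely, I would find two probability measures $\nu_0,\nu_1$ on $\{0\}\cup[1/k,\tau]$ whose first $L$ moments coincide and for which $\nu_0(\{0\})-\nu_1(\{0\})$ is as large as possible. By the standard LP duality between moment matching and polynomial approximation (going back to Chebyshev--Markov), this maximum gap equals twice the best degree-$L$ polynomial approximation error to $\mathbf{1}\{x>0\}$ on $\{0\}\cup[1/k,\tau]$ among polynomials vanishing at $0$---the very quantity driving the bias of the upper bound. Lifting $\nu_0,\nu_1$ to distributions $P_0,P_1$ by placing $m\asymp k$ i.i.d.\ marks from $\nu_i$ on $[m]$ and renormalizing (justified by concentration), the resulting Poisson-mixture models are close in total variation because matched low-order moments give exponential closeness of the single-coordinate Poisson mixtures, while $|\mathbb{E}[S|P_0]-\mathbb{E}[S|P_1]|$ is of order $k\,|\nu_0(\{0\})-\nu_1(\{0\})|$. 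Le Cam's inequality then lower-bounds the minimax risk, and tuning $L$ and $\tau$ as above reproduces the exponent in \eqref{eq:main}; the refined constant $\sqrt{2}e$ in \eqref{eq:main-asymp} comes from sharp asymptotics of the extremal Chebyshev approximation error near the endpoint.

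\textbf{Main obstacle.} The principal technical difficulty is the variance of $\tilde P_L$: the coefficients $a_j$ of the Chebyshev polynomial on the tiny interval $[1/k,\tau]$ can be as large as $\exp(\Theta(L))$, so any slack in the variance bound would overwhelm the exponentially small bias. Controlling it requires truncating to $p_i\le\tau$ and to $N_i\lesssim\log k$, exploiting the Poisson factorial moments, and using precise per-coefficient estimates for the shifted Chebyshev polynomial rather than crude $\ell_\infty$ bounds on the coefficient vector. Achieving the leading constants $1.579$ and $\sqrt{2}e$ in \eqref{eq:main-asymp} is a further delicate step, requiring the optimal (non-generic) tuning of $L$ and $\tau$ together with sharp Chebyshev asymptotics at the boundary of the approximation interval.
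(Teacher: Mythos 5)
Your architecture coincides with the paper's: Poissonize, build a linear estimator from the shifted Chebyshev polynomial on $[1/k,\Theta(\frac{\log k}{n})]$ for the upper bound, and use moment-matched priors plus (generalized) Le Cam for the lower bound, with the sharp constants coming from exact Chebyshev asymptotics. However, two steps in your sketch do not go through as stated.

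In the upper bound, your rule branches on the event $\{N_i\le c_0\log k\}$, not on $p_i\le\tau$, yet you dismiss the large-$p_i$ symbols on the grounds that their plug-in bias $e^{-np_i}$ is sub-polynomial. A symbol with $p_i$ somewhat above $\tau$ still falls into the polynomial branch with non-negligible probability, and there the (analytically continued) Chebyshev polynomial grows like $e^{\Theta(L)}$ outside the approximation interval; the resulting bias is controlled only by the competition between the $e^{-np_i}$ decay and the exterior growth of $T_L$, which forces a quantitative constraint between the degree and threshold constants (the paper's \prettyref{lmm:max-exp-cheby} on $\max_{x\ge 1}e^{-\beta x}T_L(x)$, yielding the requirement $c_0/c_1<\rho^*\approx 1.1$). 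Symmetrically, for $p_i\le \tau$ the truncation $\indc{N_i\le c_0\log k}$ destroys the unbiasedness of your factorial-moment estimator $\tilde P_L$, and the discarded term $\Expect[\tilde P_L(N_i)\indc{N_i>c_0\log k}]$ — a $k^{-\Theta(1)}$-probability event multiplied by coefficients of size $k^{\Theta(1)}$ — must be shown smaller than the target bias $\exp(-\Theta(\sqrt{n\log k/k}))=k^{-o(1)}$; neither point is addressed by your "main obstacle" discussion, which concerns only the variance. The paper avoids both remainders by choosing the linear weights $g_L(j)=a_jj!/n^j+1$ for $j\le L$ and $1$ for $j>L$ (see \prettyref{eq:gL}), for which the per-symbol bias is \emph{exactly} $e^{-np_i}P_L(p_i)$ with no truncation error (see \prettyref{eq:bias-term}). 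In the lower bound, the dual of the moment-matching program is not best approximation of $\indc{x>0}$ among polynomials vanishing at $0$: because the priors must have unit mean (so that the random vectors concentrate near the simplex), the correct dual is best uniform approximation of $x\mapsto 1/x$ on $[1,\lambda]$ by degree-$(L-1)$ polynomials (\prettyref{lmm:FL}). The exponential rate is the same, so this does not affect the $\Theta$ in \prettyref{eq:main}, but the constant $\sqrt 2 e$ in \prettyref{eq:main-asymp} comes from the exact Timan formula for that specific extremal problem. Finally, your sketch omits the regimes $n\gtrsim k\log k$ (plug-in upper bound and a two-point $\Bern(0)$-vs-$\Bern(1/k)$ lower bound give $e^{-\Theta(n/k)}$) needed for the full statement of \prettyref{eq:main}.
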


To interpret the rate of convergence in \prettyref{eq:main}, we consider three cases:
\begin{description}
        \item[Simple regime] $ n \gtrsim k\log k $: we have $ R^*(k,n) = \exp(-\Theta(\frac{n}{k})) $ 
which can be achieved by the simple plug-in estimator
\begin{equation}
    \label{eq:splug}
    \Splug\triangleq \sum_{i} \indc{N_i>0},
\end{equation}
that is, the number of observed symbols. 
Furthermore, if $\frac{n}{k \log k}$ exceeds a sufficiently large constant, all symbols are present in the data and 
$\Splug$ is in fact exact with high probability, namely, $\Prob[\Splug \neq S]\le \Expect(\Splug - S)^2 \to 0$. This can be understood as the classical coupon collector's problem (cf.~\eg, \cite{MU06}).

\item[Non-trivial regime] $\frac{k}{\log k} \ll n\ll k\log k $: 
In this case the samples are relatively scarce and the naive plug-in estimator grossly underestimate the true support size as many symbols are simply not observed. Nevertheless, accurate estimation is still possible and the optimal rate of convergence is given by $ R^*(k,n) = \exp(-\Theta(\sqrt{\frac{n\log k}{k}})) $.
This can be achieved by a linear estimator based on the Chebyshev polynomial and its approximation-theoretic properties. Although more sophisticated than the plug-in estimator, this procedure can be evaluated in $O(n+\log^2 k)$ time.

\item[Impossible regime] $n \lesssim \frac{k}{\log k}$: no consistent estimator exists.
\end{description}


Next we discuss the \emph{sample complexity} of estimating the support size, which is defined as follows:
\begin{equation}
        n^*(k,\epsilon) \triangleq \min\{n \geq 0\colon \exists \hat S, \text{ s.t. } \Prob[|\hat S - S(P)| \ge \epsilon k] \leq 0.1,  \forall P\in \calD_k \},
        \label{eq:nstar}
\end{equation}
where $\hat S$ is an integer-valued estimator measurable with respect to the samples $X_1,\ldots,X_n\iiddistr P$.
Clearly, since $\hat S - S$ is an integer, the only interesting case is $\epsilon \geq \frac{1}{k}$, with $\epsilon=\frac{1}{k}$ corresponding to the exact estimation of the support size since $ |\hat S - S| < 1 $ is equivalent to $ \hat S = S $.
Furthermore, since $S(P)$ takes values in $[k]$, $n^*(k,\frac{1}{2})=0$ by definition.
The next result characterizes the sample complexity within universal constant factors that are 
within a factor of six asymptotically.
\begin{theorem}
    \label{thm:sample}
    Fix a constant $c_0 < \frac{1}{2}$.
    For all $\frac{1}{k}\le \epsilon \leq c_0$,
    \begin{equation}
        n^*(k,\epsilon) \asymp \frac{k}{\log k}\log^2\frac{1}{\epsilon}.
        \label{eq:sample-complexity}
    \end{equation}
    Furthermore, if $ \epsilon\rightarrow 0 $ and $\epsilon = k^{o(1)}$,
    as $ k\rightarrow \infty $,
    \begin{equation}
        \frac{1+o(1)}{2e^2}\frac{k}{\log k}\log^2\frac{1}{\epsilon}\le n^*(k,\epsilon)\le \frac{1+o(1)}{2.494}\frac{k}{\log k}\log^2\frac{1}{\epsilon}.
        \label{eq:sample-complexity-asymp}
    \end{equation}
\end{theorem}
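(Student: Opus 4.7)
The plan is to derive Theorem~\ref{thm:sample} from Theorem~\ref{thm:main} via two complementary reductions. For the upper bound on $n^*(k,\epsilon)$, I would apply the Chebyshev-polynomial estimator from the proof of Theorem~\ref{thm:main} together with a sub-Gaussian or Bernstein-type concentration inequality. For the lower bound on $n^*(k,\epsilon)$, I would revisit the prior construction behind the minimax lower bound of Theorem~\ref{thm:main}, retune its degree parameter so that the gap between the expected support sizes equals $2\epsilon k$, and invoke the method of fuzzy hypotheses.

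For the achievability side, let $\hat S$ denote the Chebyshev estimator constructed in the proof of Theorem~\ref{thm:main}. Since $\hat S$ is a linear statistic of the (Poissonized) histogram, its deviation from its mean is sub-Gaussian with a variance proxy controlled by the $\ell_2$-norm of the polynomial coefficients, while its bias equals (up to a permutation-invariant sum) the pointwise error of the best degree-$L$ approximation to $\mathbf{1}_{\{x>0\}}$ on $[1/k,1]$. Choosing the degree $L$ in the construction of Theorem~\ref{thm:main} so that both $|\Expect\hat S - S|$ and $\sqrt{\mathrm{Var}(\hat S)}$ are at most a small absolute constant times $\epsilon k$, the concentration inequality gives $\Prob[|\hat S - S|\ge \epsilon k]\le 0.1$ whenever $n\ge \frac{1+o(1)}{2.494}\cdot \frac{k}{\log k}\log^2(1/\epsilon)$; the constant $1/2.494 = 1/(1.579)^2$ matches the tight exponent in \prettyref{eq:main-asymp}.

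For the converse, I revisit the two priors $\pi_0,\pi_1$ on $\calD_k$ built for the minimax lower bound in Theorem~\ref{thm:main} by matching Chebyshev moments of two measures and then Poissonizing. These priors have three key properties: (i) the total variation between the induced sample distributions is bounded away from one, (ii) the random support size $S(P)$ concentrates sharply around its mean under each prior, and (iii) the expected support sizes differ by a gap $\Delta$ that is tunable through the Chebyshev degree $L$. Choosing $L$ so that $\Delta = 2\epsilon k$ and invoking Le~Cam's method of fuzzy hypotheses gives $\sup_{P\in\calD_k}\Prob[|\hat S - S(P)|\ge \epsilon k]\ge (1 - d_{TV} - o(1))/2$, which remains above $0.1$ as long as $n\le \frac{1+o(1)}{2e^2}\cdot \frac{k}{\log k}\log^2(1/\epsilon)$. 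The constant $1/(2e^2) = 1/(\sqrt{2}e)^2$ reflects the same polynomial-approximation extremal problem that produces the tight exponent $\sqrt{2}e$ in \prettyref{eq:main-asymp}.

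The main obstacle is tracking constants precisely in both directions. On the upper-bound side, naively plugging the MSE of Theorem~\ref{thm:main} into Markov's inequality would cost a factor of four in the sample-complexity constant; avoiding this requires the stronger sub-Gaussian / Bernstein tail inequality for linear statistics together with separate control of bias and variance of $\hat S$. On the lower-bound side, the priors of Theorem~\ref{thm:main} were calibrated to maximize the MSE, so the degree parameter must be re-tuned so that $\Delta$ is exactly $2\epsilon k$, and both the concentration of $S(P)$ and the $d_{TV}$ bound must be re-verified at the new scale. The non-asymptotic $\asymp$-statement \prettyref{eq:sample-complexity} then follows from the same two arguments after dropping the asymptotic fine-tuning and retaining only universal constants.
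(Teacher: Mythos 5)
Your overall architecture matches the paper's: the upper bound comes from the bias/variance analysis of the Chebyshev estimator, and the lower bound comes from re-tuning the moment-matched priors so that the support-size gap is $\asymp \epsilon k$ and applying Le Cam's method for two composite hypotheses (you correctly note that the MSE lower bound of Theorem~\ref{thm:main} cannot simply be inverted, and indeed the paper proves the sample-complexity lower bound first, in \prettyref{prop:main-lb2}, and then derives the risk lower bound from it). However, your claim that ``naively plugging the MSE into Markov's inequality would cost a factor of four'' is mistaken, and the sub-Gaussian/Bernstein machinery you propose to avoid this is unnecessary. The paper does exactly the naive thing: $R^*(k,n)\le 0.1\epsilon^2 \Rightarrow n^*(k,\epsilon)\le n$, applied to the bound $4k^2\exp(-(2+o(1))\sqrt{\kappa n\log k/k})$ of \prettyref{prop:main-ub-poly}. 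The prefactor $4$ and the threshold $0.1$ enter only as an additive $O(1)$ term next to $2\log\frac{1}{\epsilon}$ inside the exponent, hence are absorbed into the $o(1)$ as $\epsilon\to 0$; no constant is lost. Conversely, establishing a genuinely sub-Gaussian tail for $\hat S$ with a useful variance proxy is delicate, since the coefficients $g_L(j)$ are polynomially large in $k$ (this is exactly why the paper bounds $\var[\hat S]$ via Markov brothers' inequality and shows it is dominated by the squared bias); your route would add work without improving the constant.

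The genuine gap is in the non-asymptotic statement \prettyref{eq:sample-complexity} for the regime $\frac{1}{k}\le\epsilon\le k^{-c}$, which does not follow by ``dropping the asymptotic fine-tuning.'' Both the Chebyshev-estimator analysis and the prior construction require $\delta=\frac{\log(1/\epsilon)}{\log k}\to 0$ (equivalently $\epsilon=k^{-o(1)}$); when $\epsilon$ is polynomially small in $k$ the target is $\Theta(k\log k)$ and a different argument is needed. The paper handles this by monotonicity of $\epsilon\mapsto n^*(k,\epsilon)$: the upper bound $n^*(k,1/k)\le 3k\log k$ comes from the plug-in (coupon-collector) estimator of \prettyref{prop:main-ub-plug}, not from the Chebyshev estimator, and the lower bound $n^*(k,\epsilon)\ge n^*(k,1/k^c)\gtrsim k\log k$ comes from applying \prettyref{prop:main-lb2} at $\epsilon=k^{-c}$. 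You need to add this case split to make the $\asymp$ claim complete.
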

Compared to \prettyref{thm:main}, the only difference is that here we are dealing with the zero-one loss $\indc{|S-\hat S| \ge \epsilon k}$ instead of the quadratic loss $(\frac{S-\hat S}{k})^2$.
In the proof we shall obtain upper bound for the quadratic risk and lower bound for the zero-one loss, thereby proving both \prettyref{thm:main} and \ref{thm:sample} simultaneously.
Furthermore, the choice of 0.1 as the probability of error in the definition of the sample complexity is entirely arbitrary; replacing it by $1-\delta$ for any constant $\delta \in (0,1)$ only affect $n^*(k,\epsilon)$ up to constant factors.\footnote{Specifically, upgrading the confidence to $1-\delta$ can be achieved by oversampling by merely a factor of $\log \frac{1}{\delta}$: Let $T = \log \frac{1}{\delta}$. With $nT$ samples, divide them into $T$ batches, apply the $n$-sample estimator to each batch and aggregate by taking the median. Then Hoeffding's inequality implies the desired confidence.}

\subsection{Previous work}

There is a vast amount of literature devoted to the support size estimation problem.
In parametric settings, the data generating distribution is assumed to belong to certain parametric family such as uniform or Zipf  \cite{LP56,McNeil73,DR80} and traditional estimators, such as maximum likelihood estimator and minimum variance unbiased estimator, are frequently used
 \cite{harris1968,MS82,Samuel68,ET76,LP56,HW01}  -- see the extensive surveys \cite{BF93,Gandolfi-Sastri04}.
When difficult to postulate or justify a suitable parametric assumption, various nonparametric approaches are adopted such as the Good-Turing estimator \cite{Good1953,Robbins68} and variants due to Chao and Lee \cite{Chao84,Chao92}, Jackknife estimator \cite{BO79}, empirical Bayes approach (\eg, Good-Toulmin estimator \cite{GT56}), one-sided estimator \cite{ML07}.
Despite their practical popularity, little is known about the performance guarantee of these estimators, let alone their optimality. 
Next we discuss provable results assuming the independent sampling model in \prettyref{sec:model}.

 For the naive plug-in estimator \prettyref{eq:splug}, it is easy to show (see \prettyref{prop:main-ub-plug}) that to estimate $S(P)$ within $\pm \epsilon k$ the minimal required number of samples is $\Theta(k \log \frac{1}{\epsilon})$, which scales logarithmically in $\frac{1}{\epsilon}$ but linearly in $k$, the same scaling for estimating the distribution $P$ itself.
 Recently Valiant and Valiant \cite{VV11} showed that the sample complexity is in fact sub-linear in $k$; however, the performance guarantee of the proposed estimators are still far from being optimal.
Specifically, an estimator based on a linear program that is a modification of \cite[Program 2]{ET76} is proposed and shown to achieve $n^*(k,\epsilon) \lesssim \frac{k}{\epsilon^{2+\delta}\log k} $ for any arbitrary $ \delta>0 $ \cite[Corollary 11]{VV11}, which has subsequently been improved to $ \frac{k}{\epsilon^2\log k} $ in \cite[Theorem 2, Fact 9]{VV13}. 
The lower bound $ n^*(k,\epsilon) \gtrsim \frac{k}{\log k} $ in \cite[Corollary 9]{VV10} is optimal in $k$ but provides no dependence on $ \epsilon $.
These results show that the optimal scaling in terms of $k$ is $\frac{k}{\log k}$ but the dependence on the accuracy $\epsilon$ is $\frac{1}{\epsilon^2}$, which is even worse than the plug-in estimator. 
From \prettyref{thm:sample} we see that the dependence on $\epsilon$ can be improved from polynomial to polylogarithmic $\log^2\frac{1}{\epsilon}$, which turns out to be optimal.
Furthermore, this can be attained by a linear estimator which is far more scalable than linear programming on massive datasets (see the experiment on New York Times datasets of one billion words in \prettyref{sec:exp}).


A closely related problem is the \emph{distinct elements} problem, where the goal is to estimate the number of distinct colors based on repeated draws from in an urn consisting of $k$ colored balls. For sampling with replacement, this can be viewed as a restricted case of the model in the present paper, where the distribution $P=(p_i)$ has the special form of $p_i = \frac{k_i}{k}$, with $k_i \in \integers_+$ corresponding to the number of balls of the $i\Th$ color and $\sum_i k_i=k$. 
The sample complexity under multiplicative error, that is, estimating $S(P)$ within a factor of $\alpha$ has been shown to be $\Theta(\frac{k}{\alpha^2})$ in \cite{CCMN00}. For additive error, that is, estimating $S(P)$ within $\pm \epsilon k$, 
a lower bound has been established in  \cite{RRSS09}, which, for constant $\epsilon$, scales as $k^{1 - O(\sqrt{\frac{\log\log k}{\log k}})}$.
This, in turn, implies a lower bound for $n^*(k,\epsilon)$, which is slightly suboptimal compared to the tight bound $\frac{k}{\log k} = k^{1 - \frac{\log\log k}{\log k}}$.

\subsection{Organization}
The paper is organized as follows: In \prettyref{sec:minimax} we outline the proof for the lower bound part of \prettyref{thm:main} and \ref{thm:sample} and the construction of the least favorable priors.
In \prettyref{sec:optimal} we construct an estimator based on Chebyshev polynomials which achieves the minimax risk and the sample complexity within constant factors.
In \prettyref{sec:exp} we apply our estimators to both synthetic and real data and compare the performance with existing methodologies.
Proofs of the lower and upper bounds are given in \prettyref{sec:pf-lb} and \ref{sec:pf-ub}, respectively.

\subsection{Notations}
For $ k\in \naturals $, let $ [k]\triangleq \sth{1,\dots,k} $.
The $n$-fold product of a distribution $P$ is denoted by $P^{\otimes n}$.
Let $\Poi(\lambda)$ denote the Poisson distribution with mean $\lambda$ whose probability mass function is denoted by $\poi(\lambda,j) \triangleq \frac{\lambda^j e^{-\lambda}}{j!}, j \geq 0$.
Given a positive random variable $U$, denote the Poisson mixture with respect to the distribution of $U$ by $\expect{\Poi\pth{U}}$, whose probability mass function is given by $\frac{1}{j!} \Expect[ U^j e^{-U}], j \geq 0$.
Let $\Bern(p)=p\delta_1+(1-p)\delta_0$ denote the Bernoulli1i distribution.
The total variation and the Kullback-Leibler divergence between probability measures $P$ and $Q$ are denoted by $\TV(P,Q) \triangleq \frac{1}{2} \int |\diff P-\diff Q|$ and $D(P\|Q) \triangleq  \int \diff P \log \frac{\diff P}{\diff Q}$ respectively.
We use standard big-$O$ notations,
e.g., for any positive sequences $\{a_n\}$ and $\{b_n\}$, $a_n=O(b_n)$ or $a_n \lesssim b_n$ if $a_n \leq C b_n$ for some absolute constant $C>0$, 
$a_n=o(b_n)$ or $a_n \ll b_n$ or if $\lim a_n/b_n = 0$.
In order to extract non-asymptotic statements from asymptotic ones, we pay extra attention to $o(1)$ terms. Specifically, we write $o_{\delta}(1)$  as $\delta\to0$ to indicate convergence to zero that is uniform in all other parameters.

\section{Minimax lower bound}
\label{sec:minimax}



The lower bound argument follows the idea in \cite{LNS99,CL11,WY14} and relies on the generalized Le Cam's lemma involving two composite hypothesis.
In the following we illustrate the main idea for constructing a pair of priors that are near least favorable.

Let $\lambda > 1$. Given unit-mean random variables $ U$ and $U'$ that take values in $\{0\} \cup [1,\lambda]$,
define the following random vectors
\begin{equation}
    \sfP=\frac{1}{k}(U_1,\dots,U_k),\quad \sfP'=\frac{1}{k}(U_1',\dots,U_k'),
    \label{eq:iid-PP}
\end{equation}
where $ U_i $ and $U_i' $ are \iid~copies of $ U$ and $U' $, respectively.
Although $ \sfP $ and $ \sfP' $ need not be probability distributions, as long as the standard deviation of $U$ and $U'$ are not too big, 
the law of large numbers ensures that with high probability $ \sfP$ and $\sfP' $ lie in a small neighborhood near the probability simplex, which we refer as the set of \emph{approximate} probability distributions. Furthermore, the minimum non-zeros in $\sfP$ and $\sfP'$ are at least $\frac{1}{k}$.
It can be shown that the minimax risk over approximate probability distributions is close to that over the original parameter space $\calD_k$ of probability distributions.
This allows us to use $ \sfP$ and $\sfP' $ as priors and apply Le Cam's method.
Note that both $ S(\sfP) $ and $ S(\sfP') $ are binomially distributed, which, with high probability, differ by the difference in their mean values:
\begin{align*}
  \Expect[S(\sfP)]-\Expect[S(\sfP')]
  =k(\Prob[U>0]-\Prob[U'>0])
  =k(\Prob[U'=0]-\Prob[U=0]).
\end{align*}
If we can establish the impossibility of testing whether data are generated from $\sfP$ or $\sfP'$, the resulting lower bound is proportional to $k(\Prob[U'=0]-\Prob[U=0])$.

To simplify the argument we apply the Poissonization technique where the sample size is a $ \Poi(n) $ random variable instead of a fixed number $n$.
This provably does not change the statistical nature of the problem due to the concentration of $\Poi(n)$ around its mean $ n $.
Under Poisson sampling, the histograms \prettyref{eq:histogram} still constitute a sufficient statistic, which are distributed as $ N_i\inddistr \Poi(np_i) $, as opposed to multinomial distribution in the fixed-sample-size model.
Therefore through the \iid construction in \prettyref{eq:iid-PP}, $ N_i\iiddistr \Expect[\Poi(\frac{n}{k}U)] $ or $ \Expect[\Poi(\frac{n}{k}U')] $.
Then Le Cam's lemma is applicable if $   \TV(\Expect[\Poi(\frac{n}{k} U)]^{\otimes k},\Expect[\Poi(\frac{n}{k} U')]^{\otimes k})$ is strictly bounded away from one, 
for which it suffices to show
\begin{equation}
    \TV(\Expect[\Poi(nU/k)],\Expect[\Poi(nU'/k)])\le \frac{c}{k},
    \label{eq:tv-bd}
\end{equation}
for some constant $ c<1 $.

The above construction provides a recipe for the lower bound.
To optimize the ingredients it boils down to the following optimization problem (over one-dimensional probability distributions):
Construct two priors $ U,U' $ with unit mean that maximize the difference $ \prob{U'=0}-\prob{U=0} $ subject to the total variation distance constraint \prettyref{eq:tv-bd}, which, in turn, can be guaranteed by \emph{moment matching}, \ie, ensuring $ U$ and $U' $ have identical first $L$ moments for some large $L$, and the $L_\infty$-norms $ U,U' $ are not too large. 
To summarize, our lower bound entails solving the following optimization problem:
\begin{equation}
    \begin{aligned}
        \sup & ~ \Prob[U'=0]-\Prob[U=0]  \\
        \text{s.t.}     
        & ~ \Expect[U] = \Expect[U']=1 \\
        & ~ \Expect[U^j] = \Expect[U'^j], \quad j = 1,\ldots,L \\
        & ~ U,U' \in \sth{0}\cup[1, \lambda].
    \end{aligned}
    \label{EQ:FL}
\end{equation}
The final lower bound is obtained from \prettyref{EQ:FL} by choosing $L \asymp \log k$ and $\lambda \asymp \frac{k\log k}{n}$. 

In order to evaluate the infinite-dimensional linear programming problem \prettyref{EQ:FL}, 
by considering its dual program we show (in \prettyref{app:opt-UX}) that \prettyref{EQ:FL} coincides exactly with the best uniform approximation error of the function $ x\mapsto\frac{1}{x} $ over the interval $ [1,\lambda] $ by degree-$ (L-1) $ polynomials:
\begin{equation*}
    \inf_{p\in \calP_{L-1}}\sup_{x\in [1,\lambda]}\abs{\frac{1}{x}-p(x)},
\end{equation*}
where $ \calP_{L-1} $ denotes the set of polynomials of degree $ L-1 $.
The problem of best polynomial approximation has been well-studied, cf.~\cite{timan63,DS08};
in particular, the exact formula for the best polynomial that approximates $ x\mapsto\frac{1}{x} $ and the optimal approximation error have been obtained in \cite[Sec. 2.11.1]{timan63}.

Applying the procedure described above, we obtain the following sample complexity lower bound:
\begin{prop}
    \label{prop:main-lb2}
    Let $ \delta\triangleq \frac{\log\frac{1}{\epsilon}}{\log k}$ and $ \tau\triangleq \frac{\sqrt{\log k}/k^{1/4}}{1-2\epsilon} $. As $ k\rightarrow\infty $, $\delta \to 0$ and $\tau\to 0$, 
    \begin{equation}
        n^*(k,\epsilon) \ge (1-o_{\delta}(1)-o_{k}(1)-o_\tau(1))\frac{k}{2e^2\log k}\log^2\frac{1}{2\epsilon}.
        \label{eq:main-lb2}
    \end{equation}
    Consequently,    if $ \frac{1}{k^c}\le \epsilon \le \frac{1}{2}-c'\frac{\sqrt{\log k}}{k^{1/4}} $ for some constants $ c,c' $ then $ n^*(k,\epsilon)\gtrsim \frac{k}{\log k}\log^2\frac{1}{2\epsilon} $.
\end{prop}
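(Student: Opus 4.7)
The plan is to execute the composite two-point testing program outlined after \prettyref{EQ:FL}. I first pass to the Poissonized sampling model, which changes $n^*(k,\epsilon)$ by only a multiplicative $(1+o(1))$ factor via Chernoff concentration of $\Poi(n)$ around $n$, and fix parameters $L$ and $\lambda$ to be tuned later. Let $U,U'$ be a pair of unit-mean random variables on $\sth{0}\cup[1,\lambda]$ whose first $L$ moments coincide and which attain the maximum gap $\mathsf{V}\triangleq\Prob[U'=0]-\Prob[U=0]$ of \prettyref{EQ:FL}. By the duality developed in \prettyref{app:opt-UX} together with the explicit Timan formula \cite[Sec.~2.11.1]{timan63}, $\mathsf{V}$ equals the best uniform approximation error of $x\mapsto 1/x$ on $[1,\lambda]$ by polynomials of degree $L-1$, with asymptotic rate $\log(1/\mathsf{V})\sim 2L/\sqrt{\lambda}$ as $\lambda,L\to\infty$.

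Next I form the iid vectors $\sfP,\sfP'$ via \prettyref{eq:iid-PP}. A Bernstein inequality shows that their total masses concentrate within $1\pm O(\sqrt{\lambda\log k/k})$, which is $o(1)$ precisely under the hypothesis $\tau\to 0$; conditioning on this event and renormalizing yields bona fide elements of $\calD_k$, perturbing the induced sampling laws by only $o(1)$ in total variation. This is the source of the $o_\tau(1)$ correction. The support sizes $S(\sfP)$ and $S(\sfP')$ are binomials whose means differ by $k\mathsf{V}$ and whose fluctuations are $O(\sqrt k)$, so choosing $\lambda$ such that $\mathsf{V}\ge 2\epsilon(1+o(1))$ guarantees that any estimator $\hat S$ satisfying $\Prob[|\hat S-S|\ge\epsilon k]\le 0.1$ under both priors distinguishes them with probability at least $0.8$.

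The generalized Le Cam lemma therefore reduces the lower bound to showing that $\TV\pth{\Expect[\Poi(nU/k)]^{\otimes k},\Expect[\Poi(nU'/k)]^{\otimes k}}$ is bounded away from $1$; by tensorization it suffices to prove $\TV(\Expect[\Poi(nU/k)],\Expect[\Poi(nU'/k)])\le c/k$ for some $c<1$. Expanding each Poisson pmf as a power series in $nU/k$ and using the matching of the first $L$ moments of $U$ and $U'$,
\begin{equation*}
\TV\pth{\Expect[\Poi(nU/k)],\Expect[\Poi(nU'/k)]}\le \sum_{j>L}\frac{1}{j!}\pth{\frac{2n\lambda}{k}}^{j}\lesssim \pth{\frac{2en\lambda}{kL}}^{L}
\end{equation*}
by Stirling, which is at most $c/k$ when $n\lambda\le kL\,e^{-(1+\log k/L)}(1+o(1))$. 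Combining this with the approximation constraint $\sqrt{\lambda}\ge (1+o(1))\cdot 2L/\log(1/(2\epsilon))$ and optimizing over $L=c\log k$, the admissible $n$ is maximized at $c=1$ and produces the constant $1/(2e^2)$ appearing in \prettyref{eq:main-lb2}. The ``consequently'' statement then follows because the stated range of $\epsilon$ forces both $\delta$ and $\tau$ to vanish.

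The main obstacle is the careful tracking of sharp constants through three nested trade-offs---the Chebyshev approximation rate, the moment-matching TV budget, and the binomial fluctuations---in order to arrive at precisely $1/(2e^2)$ rather than an unspecified constant of this form. The conditioning argument in the second step, which converts the iid approximate priors into genuine elements of $\calD_k$, is technically delicate but routine, following the template of \cite{WY14}.
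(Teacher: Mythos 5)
Your overall architecture (Poissonization, i.i.d.\ moment-matched priors, duality with best polynomial approximation of $1/x$, generalized Le Cam) is the same as the paper's, but two of your steps fail quantitatively, and the second means you would not recover the constant $\frac{1}{2e^2}$ in \prettyref{eq:main-lb2}.

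First, the normalization step. You condition on $|\sum_i U_i/k-1|\le \eta$, renormalize $\sfP$ to a genuine element of $\calD_k$, and assert that this perturbs the induced sampling law by only $o(1)$ in total variation. It does not: the KL divergence between $\bigotimes_i\Poi(np_i)$ and $\bigotimes_i\Poi(np_i/(1+\eta))$ equals $n(-\eta+(1+\eta)\log(1+\eta))\approx n\eta^2/2$, and since $\eta$ is typically of order $\sqrt{\lambda/k}$ while the optimal parameters satisfy $n\lambda\asymp k\log k$, this is of order $\log k\to\infty$; the two laws are asymptotically far apart, not close. (Renormalization also pushes the minimum nonzero mass below $\frac1k$ whenever the total mass exceeds one, which is why the paper places $U,U'$ on $\sth{0}\cup[1+\nu,\lambda]$ rather than $\sth{0}\cup[1,\lambda]$.) The paper circumvents this via \prettyref{lmm:lb-generalize}: it enlarges the parameter space to the approximate distributions $\calD_k(\nu)$ and exploits the scale-invariance $S(P)=S(P/\sum_i p_i)$ --- conditioned on the total Poisson count $n'$, the histogram is multinomial under the normalized vector, which lies in $\calD_k$ and has the same support size, so a good fixed-sample-size estimator over $\calD_k$ can simply be applied at sample size $n'$. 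This is a reduction at the level of estimators, not a total-variation comparison of priors, and it is the ingredient your argument is missing.

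Second, the total variation bound between the Poisson mixtures. The power-series/moment-matching expansion you invoke gives roughly $\sum_{j>L}(2\Lambda)^j/j!\lesssim(\frac{2e\Lambda}{L})^L$ with $\Lambda=n\lambda/k$, which is the bound of \cite[Lemma 3]{WY14}. The paper's \prettyref{lmm:tv-bound} sharpens this to $(\frac{e\Lambda}{2L})^L$ --- a factor of $4^L$ --- by bounding $\sum_j E_L(f_j,[0,\Lambda])$ for $f_j(x)=e^{-x}x^j/j!$ through Chebyshev-interpolation error estimates and Laguerre-polynomial derivative bounds, and the authors note explicitly that this improvement is what produces the sharp constants. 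Carrying your weaker bound through the optimization (the constraint becomes $n\lambda\le\frac{kL}{2e}e^{-\log k/L}$ instead of $\frac{2kL}{e}e^{-\log k/L}$, combined with the separation requirement $\lambda\ge(4+o(1))L^2/\log^2\frac{1}{2\epsilon}$ and $L=c\log k$ optimized at $c=1$) yields the constant $\frac{1}{8e^2}$, not $\frac{1}{2e^2}$. As written, your argument therefore establishes the order-optimal lower bound and the ``consequently'' clause, but not the displayed inequality with the stated constant.
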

The lower bounds announced in Theorems \ref{thm:main} and \ref{thm:sample} follow from \prettyref{prop:main-lb2} combined with a simple two-point argument. See \prettyref{sec:pf-thm-lb}.

\section{Optimal estimator via Chebyshev polynomials}
\label{sec:optimal}
In this section we prove the upper bound part of \prettyref{thm:main} and describe the rate-optimal support size estimator.
Following the same idea as in the lower bound part, we shall apply the Poissonization technique to simplify the analysis where the sample size is $ \Poi(n) $ instead of a fixed number $ n $ and hence the sufficient statistics $ N=(N_1,\dots,N_k)\inddistr\Poi(np_i) $.
Analogous to \prettyref{eq:risk}, the minimax risk under the Poisson sampling is defined by
\begin{equation}
    \tilde{R}^*(k,n)
    \triangleq \inf_{\hat{S}}\sup_{P\in\calD_k}\Expect[\ell(\hat S,S)].
    \label{eq:risk-poisson}
\end{equation}
Due to the concentration of $ \Poi(n) $ near its mean $ n $, the minimax risk with fixed sample size is close to that under the Poisson sampling, as shown in the following lemma, which allows us to focus on the model using Poissonized sample size.
\begin{lemma}
    For any $ \beta<1 $,
    \begin{equation*}
        R^*(k,n)\le \frac{\tilde{R}^*(k,(1-\beta)n)}{1-\exp(-n\beta^2/2)}.
    \end{equation*}
    \label{lmm:ub-poisson}
\end{lemma}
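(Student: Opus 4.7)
The plan is to convert a near-optimal Poisson-sampling estimator into a fixed-sample-size estimator by simulating a $\Poi(n')$ sample count (with $n'\triangleq(1-\beta)n$) from the $n$ iid observations. Since $n'<n$, the Chernoff tail bound for the Poisson distribution gives $\Prob[\Poi(n')>n]\le \exp(-n\beta^2/2)$, so the simulation succeeds with overwhelming probability and the associated correction shows up as the factor $1-\exp(-n\beta^2/2)$ in the denominator.

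In detail, fix any $\eta>0$ and let $\hat S^*$ be a Poisson-model estimator with $\sup_{P\in\calD_k}\Expect[\ell(\hat S^*,S)]\le \tilde R^*(k,n')+\eta$. Given $X_1,\ldots,X_n\iiddistr P$, I would define a (randomized) fixed-size estimator $\hat T$ as follows: draw $N'$ from the law of $\Poi(n')$ conditioned on $\{N'\le n\}$, independently of the data, and set $\hat T\triangleq\hat S^*(X_1,\ldots,X_{N'})$. Because $X_1,X_2,\ldots$ are iid, the joint law of $(N',X_1,\ldots,X_{N'})$ under $\hat T$ is exactly the Poisson-sampling joint law conditioned on $\{N'\le n\}$. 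Writing $p\triangleq\Prob[\Poi(n')>n]$, this yields for every $P\in\calD_k$
\begin{align*}
\Expect[\ell(\hat T,S)]
&=\Expect_{\Poi(n')}[\ell(\hat S^*,S)\mid N'\le n]\\
&\le \frac{\Expect_{\Poi(n')}[\ell(\hat S^*,S)]}{1-p}
\le \frac{\tilde R^*(k,n')+\eta}{1-p},
\end{align*}
where the first inequality just drops the non-negative contribution from the event $\{N'>n\}$ to the full Poisson risk.

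Taking $\sup_P$, then $\inf_{\hat S^*}$, then $\eta\downarrow 0$ gives $R^*(k,n)\le \tilde R^*(k,n')/(1-p)$, and substituting the Chernoff bound for $p$ finishes the proof. No step presents a genuine obstacle; this is the standard Poissonization reduction. The one point I would be careful to check is the distributional identification in the construction of $\hat T$: it rests on the simple observation that drawing $N'$ independently of an iid sequence and keeping its length-$N'$ prefix reproduces the Poisson-sampling joint law verbatim, which is what lets us transfer the Poisson-model risk bound on $\hat S^*$ to a fixed-sample-size risk bound on $\hat T$.
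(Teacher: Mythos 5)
Your proof is correct, but it takes a genuinely different route from the paper's. You argue in the primal direction: take a near-optimal Poisson-model estimator and convert it into a fixed-$n$ estimator by drawing $N'\sim\Poi((1-\beta)n)$ conditioned on $\{N'\le n\}$, independently of the data, and feeding it the length-$N'$ prefix of the sample; the distributional identification you single out at the end is exactly the right thing to check, and the factor $\frac{1}{1-p}$ falls out of dropping the nonnegative contribution of $\{N'>n\}$ from the unconditional Poisson risk. The paper instead argues in the dual direction: it lower-bounds $\tilde{R}^*(k,(1-\beta)n)$ by a Bayes risk under an arbitrary prior $\pi$, exploits that for a fixed prior the Bayes risk $m\mapsto\inf_{\hat S_m}\Expect[\ell(\hat S_m,S)]$ is nonincreasing in $m$ to discard the terms with $n'>n$, and then invokes a minimax theorem to convert the resulting $\sup_\pi\inf$ back into $R^*(k,n)$. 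Your argument buys elementarity --- no minimax theorem, no passage through priors --- at the price of producing a \emph{randomized} fixed-$n$ estimator, which the definition \prettyref{eq:risk} does not literally allow. This is a one-line repair rather than a gap: since $\ell(\hat S,S)=(\frac{\hat S-S}{k})^2$ is convex in $\hat S$, replacing $\hat T(X,W)$ by $\Expect_W[\hat T(X,W)]$ can only decrease the risk pointwise in $P$ by Jensen, so a deterministic estimator with the same guarantee exists (the integer-valuedness required by \prettyref{eq:risk} is already violated by the paper's own estimator \prettyref{eq:hatS}, so that restriction should not be taken literally). Both routes use the same Poisson Chernoff bound $\Prob[\Poi((1-\beta)n)>n]\le \exp(n(\beta+\log(1-\beta)))\le \exp(-n\beta^2/2)$.
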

In the next proposition, we first analyze the risk of the plug-in estimator $ \Splug $, which yields the optimal upper bound of \prettyref{thm:main} in the regime of $ n\gtrsim k\log k $. This is consistent with the coupon collection intuition explained in \prettyref{sec:main}.
\begin{prop}
    \label{prop:main-ub-plug}
    For all $ n, k\ge 1 $,
    \begin{equation}
        \sup_{P\in\calD_k}\Expect(S(P)-\Splug(N))^2\le k^2e^{-2n/k}+ke^{-n/k},
        \label{eq:main-ub-plug}
    \end{equation}
    where $ N=(N_1,N_2,\dots) $ and $ N_i\inddistr\Poi(np_i) $.
    
    Conversely, for $P$ that is uniform over $[k]$, for any fixed $\delta \in (0,1)$, if $n \leq (1-\delta) k \log \frac{1}{\epsilon}$, then as $k\diverge$,
    \begin{equation}
        \Prob[|S(P)-\Splug(N)|\leq \epsilon k]\leq e^{-\Omega(k^\delta)}.
        \label{eq:plug-converse}
\end{equation}    
\end{prop}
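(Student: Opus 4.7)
Both halves of the proposition rest on the same elementary observation: under Poisson sampling with $N_i\inddistr\Poi(np_i)$, the defect of the plug-in estimator
\[
    S(P)-\Splug(N)=\sum_{i:\,p_i>0}\indc{N_i=0}
\]
is a sum of \emph{independent} Bernoulli indicators with parameters $e^{-np_i}$, and $p_i\ge 1/k$ for any $P\in\calD_k$.

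For the upper bound \prettyref{eq:main-ub-plug}, the plan is to decompose the second moment as variance plus squared mean. The squared mean is $(\sum_{i:\,p_i>0}e^{-np_i})^2$; since $P\in\calD_k$ has at most $k$ non-zero masses, each satisfying $p_i\ge 1/k$, every exponential is at most $e^{-n/k}$ and the sum is at most $ke^{-n/k}$. The variance is bounded by the same sum since each Bernoulli's variance is at most its mean. Adding the two contributions gives precisely $k^2e^{-2n/k}+ke^{-n/k}$.

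For the converse \prettyref{eq:plug-converse}, the uniform prior on $[k]$ makes the indicators $\indc{N_i=0}$ i.i.d.\ $\Bern(p)$ with $p=e^{-n/k}$, so $Y\triangleq S-\Splug$ is exactly $\mathrm{Binomial}(k,p)$. The hypothesis $n\le(1-\delta)k\log(1/\epsilon)$ forces $p\ge\epsilon^{1-\delta}$, so $\Expect[Y]\ge k\epsilon^{1-\delta}$ dominates the threshold $\epsilon k$ by a factor at least $\epsilon^{-\delta}$. Thus $\{|S-\Splug|\le\epsilon k\}=\{Y\le\epsilon k\}$ is a lower-tail event at multiplicative distance $1-\epsilon/p\ge 1-\epsilon^{\delta}$ from the mean, and the multiplicative Chernoff bound $\Prob[Y\le(1-t)kp]\le\exp(-kpt^2/2)$ with $t=1-\epsilon^\delta$ yields $\Prob[|S-\Splug|\le\epsilon k]\le\exp(-\Omega(kp))\le\exp\bigl(-\Omega(k\epsilon^{1-\delta})\bigr)$. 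Since the only regime of interest is $\epsilon\ge 1/k$, one has $k\epsilon^{1-\delta}\ge k^{\delta}$, which is the stated $\exp(-\Omega(k^\delta))$.

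Neither half is technically demanding. The only mild subtlety is confirming that the Chernoff deviation $t=1-\epsilon^\delta$ is bounded away from $0$ uniformly in the intended range of $\epsilon$ (say $\epsilon$ bounded away from $1$), so that the rate is genuinely $\exp(-\Omega(kp))$ rather than a degraded $\exp(-\Omega(kpt^2))$ with $t\to 0$. Conceptually the converse is just the classical coupon-collector lower bound: with $k$ coupons each missed independently with probability $p$, if $p$ is much larger than $\epsilon$ then a multiplicatively large fraction of coupons remain unseen with overwhelming probability.
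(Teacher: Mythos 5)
Your proposal is correct and follows essentially the same route as the paper: the upper bound is the identical bias--variance decomposition of $S-\Splug=\sum_{i:p_i>0}\indc{N_i=0}$ with each term bounded via $p_i\ge 1/k$, and the converse is the same binomial lower-tail Chernoff bound (the paper states it in the KL-divergence form $e^{-kd(\epsilon\|e^{-n/k})}$ rather than the multiplicative form, but both give $\exp(-\Omega(k^\delta))$ using $\epsilon\ge 1/k$). No gaps.
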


To specify the optimal estimator in the regime of $ n\lesssim k\log k $, we first introduce Chebyshev polynomials.
Recall that the usual Chebyshev polynomial of degree $L$ is 
\begin{equation}
    T_L(x) = \cos (L\arccos x) = (z^L+z^{-L})/2,
    \label{eq:cheby}
\end{equation}
where $ z $ is the solution of the quadratic equation $ z+z^{-1}=2x $.
Note that $T_L$ is bounded in magnitude by one over the interval $[-1,1]$. 
The shifted and scaled Chebyshev polynomial over the interval $ [l,r] $ is given by
\begin{equation}
    P_L(x)
    =-\frac{T_L(\frac{2x-r-l}{r-l})}{T_L(\frac{-r-l}{r-l})}
    \triangleq \sum_{m=0}^{L}a_mx^m,
    \label{eq:PL}
\end{equation}
which satisfies $ P_L(0)=-1 $ and hence $ a_0=-1 $; the remaining coefficients $a_1,\dots,a_L $ can be obtained from those of the Chebyshev polynomial \cite[2.9.12]{timan63} and the binomial expansion, or more directly,
\begin{equation}
    a_j=\frac{P_L^{(j)}(0)}{j!}=-\pth{\frac{2}{r-j}}^j\frac{1}{j!}\frac{T_L^{(j)}(-\frac{r+l}{r-l})}{T_L(-\frac{r+l}{r-l})}.
    \label{eq:aj}
\end{equation}

Let
\begin{equation}
    g_L(j)=
    \begin{cases}
        \frac{a_jj!}{n^j}+1,&  j\le L,\\
        1,&j> L.
    \end{cases}
    \label{eq:gL}
\end{equation}
Obviously $ g_L(0)=0 $ since $ a_0=-1 $ by design.
We Define our estimator by
\begin{equation}
    \hat S=\sum_{i}g_L(N_i).
    \label{eq:hatS}
\end{equation}
We proceed to explain the reasoning behind the estimator \prettyref{eq:hatS}.
Note that the bias is $\Expect[\hat S - S] = \sum_i \expect{g_L(N_i)-\indc{p_i>0}}$.
Since $ g_L(0)=0 $ and $ g_L(j)=1 $ for $ j>L $, each term in the bias can be written as
\begin{align}
  \expect{g_L(N_i)-\indc{p_i>0}}
  =&\expect{(g_L(N_i)-1)\indc{p_i>0}\indc{N_i\le L}}\nonumber\\
  =&\sum_{j=0}^{L}e^{-np_i}\frac{(np_i)^j}{j!}\frac{a_jj!}{n^j}\indc{p_i>0}
     =e^{-np_i}P_L(p_i)\indc{p_i>0}
     \label{eq:bias-term}
\end{align}
where $P_L$ is the degree-$L$ polynomial defined in \prettyref{eq:PL}.

Let
\begin{equation}
    L \triangleq \floor{c_0\log k}, \quad r\triangleq\frac{c_1\log k}{n}, \quad l\triangleq\frac{1}{k},
    \label{eq:constants-ref}
\end{equation} 
where $ c_0<c_1 $ are constants to be specified.
The main intuition is that since $c_0<c_1$, 
then with high probability, whenever $ N_i\le L=\floor{c_0\log k} $ the corresponding mass must satisfy $ p_i\le \frac{c_1\log k}{n} $.
That is, if $ p_i>0 $ and $ N_i\le L $ then $ p_i\in [\frac{1}{k},\frac{c_1\log k}{n}] $, and hence $ P_L(p_i) $ is bounded by the sup-norm of $ P_L $ over the interval $ [\frac{1}{k},\frac{c_1\log k}{n}] $.
In view of the property of Chebyshev polynomials \cite[Ex.~2.13.14]{timan63}, \prettyref{eq:PL} is the unique degree-$L$ polynomial that passes through the point $(0,-1)$ and deviates the least from zero over $ [\frac{1}{k},\frac{c_1\log k}{n}] $.
This explains the coefficients \prettyref{eq:gL} which are chosen to minimize the bias.

The next proposition gives an upper bound of the quadratic risk of our estimator \prettyref{eq:hatS}:
\begin{prop}
Let $ c_0 = 0.558$ and $c_1=0.5 $.
    As $ \delta\triangleq \frac{n}{k\log k}\rightarrow 0 $ and $ k\diverge $,
    \begin{equation}
        \sup_{P\in\calD_k}\Expect(\hat S(N)-S(P))^2\le  4k^2(1+o_{k}(1))\exp\pth{-(2+o_\delta(1))\sqrt{\kappa\frac{n\log k}{k}}},
        \label{eq:main-ub-poly}
    \end{equation}
    where $ N=(N_1,N_2,\dots)\inddistr\Poi(np_i) $, and $\kappa = 2.494$.
    \label{prop:main-ub-poly}
\end{prop}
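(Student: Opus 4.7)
The plan is to bound the Poissonized MSE by the standard decomposition $\Expect(\hat S - S)^2 = (\Expect[\hat S] - S)^2 + \mathrm{Var}(\hat S)$, transfer to the fixed sample size regime via \prettyref{lmm:ub-poisson}, and let the extremal property of the shifted Chebyshev polynomial do the work. Under Poissonization the $N_i$'s are independent, so $\mathrm{Var}(\hat S) = \sum_i \mathrm{Var}(g_L(N_i))$, while \prettyref{eq:bias-term} already gives the per-coordinate bias as $e^{-np_i} P_L(p_i)\indc{p_i > 0}$. In both sums the indices are partitioned according to whether $p_i \in [l,r]$ or $p_i > r$, with $l = 1/k$ and $r = c_1\log k / n$: on $[l,r]$ the Chebyshev bound supplies exponential smallness, and on $(r,1]$ the factor $e^{-np_i} \leq k^{-c_1}$ does.

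For the squared bias over $[l,r]$, the defining property of $P_L$ in \prettyref{eq:PL} yields $|P_L(p_i)| \leq 1/T_L(\sigma_0)$ with $\sigma_0 \triangleq (r+l)/(r-l)$. Writing $\sigma_0 = \cosh\theta$ and using $l \ll r$ to get $\theta \sim 2\sqrt{l/r}$, the identity $T_L(\cosh\theta) = \cosh(L\theta)$ gives $T_L(\sigma_0) \sim \tfrac{1}{2}\exp(L\theta) = \tfrac{1}{2}\exp(\sqrt{\kappa n\log k/k})$, where $\kappa = 4c_0^2/c_1$. Since $|\{i: p_i > 0\}| \leq k$, the contribution to the squared bias from this range is at most $4k^2 \exp(-2\sqrt{\kappa n\log k/k})$, matching \prettyref{eq:main-ub-poly}. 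For $p_i > r$, I would combine $e^{-np_i} \leq k^{-c_1}$ with a crude polynomial-in-$k$ bound on $|P_L(p_i)|$ (derived from $T_L(\cosh\phi) = \cosh(L\phi)$ for $\phi = \arccosh(\sigma(p_i))$ together with $p_i \leq 1$) and $|\{i: p_i > r\}| \leq 1/r$ to ensure this contribution is of strictly lower order.

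For the variance I use $\mathrm{Var}(g_L(N_i)) \leq \Expect[(g_L(N_i)-1)^2] = e^{-np_i}\sum_{m=0}^L a_m^2\, m!\, p_i^m/n^m$ and aim to show this is $O(\sup_{[l,r]} |P_L|^2)$ for each $i$ with $p_i > 0$, so that $\mathrm{Var}(\hat S) \lesssim k\exp(-2\sqrt{\kappa n\log k/k})$, which is dominated by the squared bias once $k \gg 1$. The coefficients $|a_m|$ are read off from \prettyref{eq:aj} in terms of $T_L^{(m)}(-\sigma_0)/T_L(\sigma_0)$, and Markov brothers' inequality bounds the Chebyshev derivatives; combined with the factor $m!/n^m$ (tiny since $L = \lfloor c_0\log k\rfloor \ll n$ in the regime of interest), the sum collapses to essentially a geometric series whose leading term is comparable to $\sup_{[l,r]}|P_L|^2$.

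The main obstacle is precisely this variance bound: the coefficients $|a_m|$ grow roughly like $(2/(r-l))^m$, so a naive estimate of $\sum_m a_m^2 m!(p_i/n)^m$ badly overshoots $\sup_{[l,r]}|P_L|^2$; showing that the Chebyshev derivative cancellation together with the small factors $m!/n^m$ leaves behind only a geometric series with ratio $\ll 1$ is the delicate step. Once that is in place, summing the squared bias and variance and choosing $c_0 = 0.558$, $c_1 = 0.5$ (so that $\kappa = 4c_0^2/c_1 \approx 2.494$) produces the prefactor $4k^2(1 + o_k(1))$ and the exponent $(2 + o_\delta(1))\sqrt{\kappa n\log k/k}$ in \prettyref{eq:main-ub-poly}.
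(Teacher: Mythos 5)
Your overall architecture (bias--variance decomposition under Poissonization, Chebyshev extremal property on $[l,r]$, Markov brothers for the coefficients) matches the paper's, and your exponent $4c_0^2/c_1$ for the bias on $[l,r]$ is correct. But two of your key claims are false as stated, and they are precisely where the constants $c_0=0.558$, $c_1=0.5$ earn their keep. First, the region $p_i>r$: you propose to combine $e^{-np_i}\le k^{-c_1}$ with ``a crude polynomial-in-$k$ bound on $|P_L(p_i)|$.'' No such bound exists: $|P_L|$ is increasing on $(r,1]$ and $|P_L(1)|\asymp T_L\bigl(\tfrac{2}{r}\bigr)/|T_L(-\tfrac{r+l}{r-l})|$, which with $L=\lfloor c_0\log k\rfloor$ and $\tfrac{2}{r}=\tfrac{2n}{c_1\log k}$ is of order $\exp\bigl(L\log\tfrac{4n}{c_1\log k}\bigr)$, super-polynomial in $k$ throughout the regime of interest. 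One must bound the product $e^{-nx}|P_L(x)|$ jointly over $x>r$, which the paper does by locating the interior maximizer of $x\mapsto e^{-\beta x}T_L(x)$ (its \prettyref{lmm:max-exp-cheby}); this yields a contribution of the \emph{same} exponential order as the $[l,r]$ term, controlled only under the constraint $\rho=c_0/c_1<\rho^*\approx 1.1$ --- a constraint your argument never surfaces, and which the chosen constants satisfy only barely ($\rho=1.116$).

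Second, your variance target $\var(g_L(N_i))=O(\sup_{[l,r]}|P_L|^2)$ per coordinate is not just delicate, it is false. The oscillation of $P_L$ produces cancellation in the first moment (that is the bias computation) but none in the sum of squares: the coefficients $g_L(j)-1=a_jj!/n^j$ are polynomially large in $k$ (the paper's bound via Markov brothers and Stirling gives $(a_jj!/n^j)^2/j\le \tfrac{e^2}{4}k^{2c_0\tau(\rho)}$ with $2c_0\tau(\rho)$ close to $1$; compare Figure 1, where the coefficients reach $\pm 10^3$ for $k=10^6$), so a single coordinate with $np_i\approx j$ already contributes variance polynomial in $k$, not $O(1)$. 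The total variance is of order $k^{1+2c_0\tau(\rho)+o(1)}$, i.e.\ nearly $k^2$, and the proof closes not because the variance is tiny but because it is (barely) dominated by the squared bias $k^{2-o_\delta(1)}$ under the second constraint $2c_0\tau(\rho)<1$. The two constraints $\rho<\rho^*$ and $2c_0\tau(\rho)\le 1$ are exactly what determine $\kappa=\sup_\rho 2\rho/\tau(\rho)\approx 2.494$ and the admissibility of $(c_0,c_1)=(0.558,0.5)$; your proposal reproduces the right numerical value of $\kappa$ from the $[l,r]$ bias alone but has no mechanism to verify that the other two error sources do not dominate --- and the mechanisms you sketch for them would both fail.
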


The minimax upper bounds in Theorems \ref{thm:main} and \ref{thm:sample} follow from combining Propositions \ref{prop:main-ub-plug} and \ref{prop:main-ub-poly}. See \prettyref{sec:pf-thm-ub}.

The estimator \prettyref{eq:hatS} belong to the family of \emph{linear estimators}:
\begin{equation}
    \hat S = \sum_i f(N_i) =\sum_{j \geq 1} f(j) h_j ,       
    \label{eq:linear}
\end{equation} 
which is a linear combination of fingerprints $h_j$'s defined in \prettyref{eq:fingerprint}.
Other notable examples of linear estimators include:
\begin{itemize}
    \item Plug-in estimator \prettyref{eq:splug}: $\Splug = h_1+h_2+\dots$. 
    \item Good-Toulmin estimator \cite{GT56}: for some $t>0$,
    \begin{equation}
        \hat S_{\rm GT} = \Splug + 
        t h_1 - t^2 h_2 + t^3 h_3 - t^4 h_4 + \ldots
        \label{eq:GT}
    \end{equation}
    \item Efron-Thisted estimator \cite{ET76}: for some $t>0$ and $J\in\naturals$,
    \begin{equation}
        \hat S_{\rm ET} = \Splug + \sum_{j=1}^J (-1)^{j+1} t^j b_j h_j,
        \label{eq:ET}
    \end{equation}
    where $b_j = \Prob[\Binom(J,1/(t+1)) \geq j]$.
\end{itemize}
By definition, our estimator \prettyref{eq:hatS} can be written as
\begin{equation}
    \hat S =  \sum_{j=1}^L g_L(j) h_j + \sum_{j>L} h_j.
    \label{eq:Shat1}
\end{equation}
By \prettyref{eq:gL}, $ g_L $ is also a polynomial of degree $ L $, which is oscillating and results in coefficients with alternating signs (see \prettyref{fig:coeffs}).
\begin{figure}[hbt]
    \centering
    \includegraphics[width=.6\linewidth]{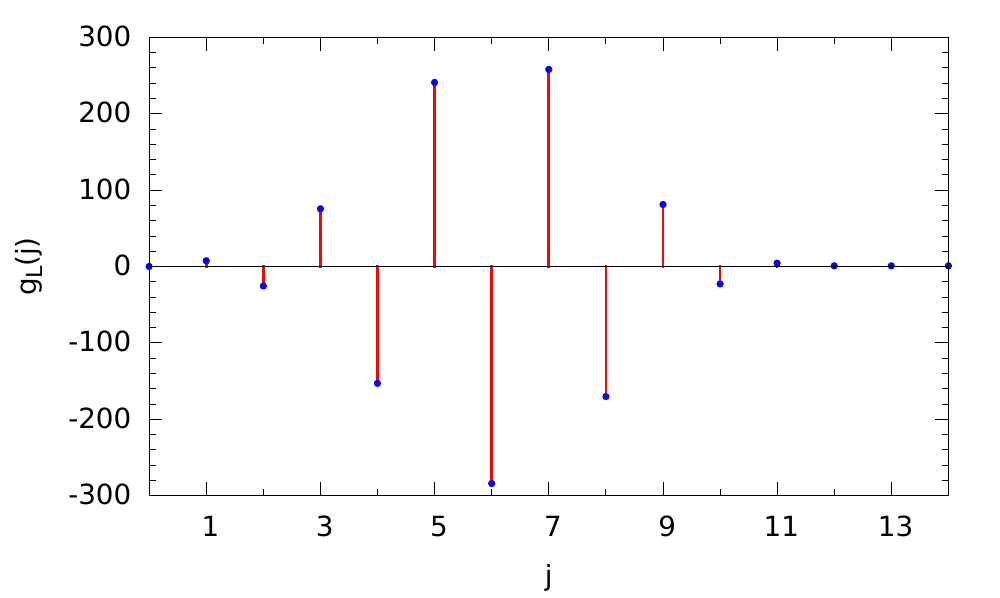}
    \caption{Coefficients of estimator $ g_L $ in \prettyref{eq:gL} with $ c_0=0.45,c_1=0.5 $, $ k=10^6$ and $ n=2\times 10^5$.\label{fig:coeffs} }
\end{figure}
Interestingly, this behavior, although counterintuitive, coincide with many classical estimators, such as \prettyref{eq:GT} and \prettyref{eq:ET}.

\begin{remark}[Time complexity]
    \label{rmk:complexity}
    The evaluation of the estimator \prettyref{eq:linear} consists of three parts:
    \begin{enumerate}
        \item Construction of the estimator: $ O(L^2)=O(\log^2k) $, which amounts to computing the coefficients $ f_L(j) $ per \prettyref{eq:aj};
        \item Computing the histograms $ N_i $ and fingerprints $ h_j $: $ O(n) $;
        \item Evaluating the linear combination: $ O(n\wedge k) $, since the number of non-zero terms in the second summation of \prettyref{eq:linear} is at most $ n\wedge k $. 
    \end{enumerate}
    Therefore the total time complexity is $ O(n+\log^2k) $.
\end{remark}

\begin{remark}
    The technique of polynomial approximation has been previously used for estimating non-smooth functions ($L_q$-norms) in Gaussian models \cite{INK87,LNS99,CL11} and more recently for estimating information quantities (entropy and power sums) on large discrete alphabets \cite{WY14,JVHW15}.
    The design principle is to  approximate the non-smooth function on a given interval using algebraic or trigonometric polynomials for which unbiased estimators exist and choose the degree to balance the bias (approximation error) and the variance (stochastic error).
    Note that in general uniform approximation by polynomials is only possible on a compact interval. 
    Therefore, in many situations, the construction of the estimator is a two-stage procedure involving \emph{sample splitting}:
    First, use half of the sample to test whether the corresponding parameter lies in the given interval; 
    Second, use the remaining samples to construct an unbiased estimator for the approximating polynomial if the parameter belongs to the interval or apply plug-in estimators otherwise (see, \eg, \cite{WY14,JVHW15} and \cite[Section 5]{CL11}).

    While the benefit of sample splitting is to make the analysis tractable by capitalizing on the independence of the two subsamples, the downside is obviously sacrificing the statistical accuracy since half of the samples are wasted.
    In the present paper, to estimate the support size, we forgo the sample splitting approach and directly design a linear estimator.
    Instead of using a polynomial as a proxy for the original function and then constructing its unbiased estimator, the best polynomial approximation arises as a natural step in controlling the bias (see \prettyref{eq:bias-term}).
\end{remark}

\section{Experiments}
\label{sec:exp}
We evaluate the performance of our estimator on both synthetic and real datasets in comparison with popular existing procedures. 
In the experiments we choose the constants $ c_0=0.45, c_1=0.5 $ in \prettyref{eq:constants-ref}, 
instead of $c_0=0.558$ which is optimized to yield the best rate of convergence in \prettyref{prop:main-ub-poly} under the iid sample model.
 The reason for such a choice is that in the real-data experiments the samples are not necessarily generated independently and dependency leads to a higher variance. By choosing a smaller $c_0$, the Chebyshev polynomials have a slightly smaller degree, which results in smaller variance and more robustness to model mismatch.
Each experiment is averaged over $ 50 $ independent trials and the standard deviations are shown as error bars.

\paragraph{Synthetic data}
We consider data independently sampled from the following distributions, 
(a) the uniform distribution with $ p_i=\frac{1}{k} $, 
(b) Zipf distributions with $ p_i\propto i^{-\alpha} $ and $ \alpha$ being either $1$ or $0.5 $, 
(c) an even mixture of geometric distribution and Zipf distribution where for the first half of the alphabet $ p_i \propto 1/i$ and  for the second half $ p_{i+k/2} \propto (1-\frac{2}{k})^{i-1} $, $ 1\le i\le \frac{k}{2} $.
The alphabet size $ k $ varies in each distribution so that the minimum non-zero mass is roughly $ 10^{-6} $. Accordingly, a degree-6 Chebyshev polynomial is applied. Therefore, according to \prettyref{eq:Shat1}, we apply the polynomial estimator $ g_L $ to symbols appearing at most six times and the plug-in estimator otherwise. 
\begin{figure}[ht]
    \centering
    \includegraphics[width=1.0\linewidth]{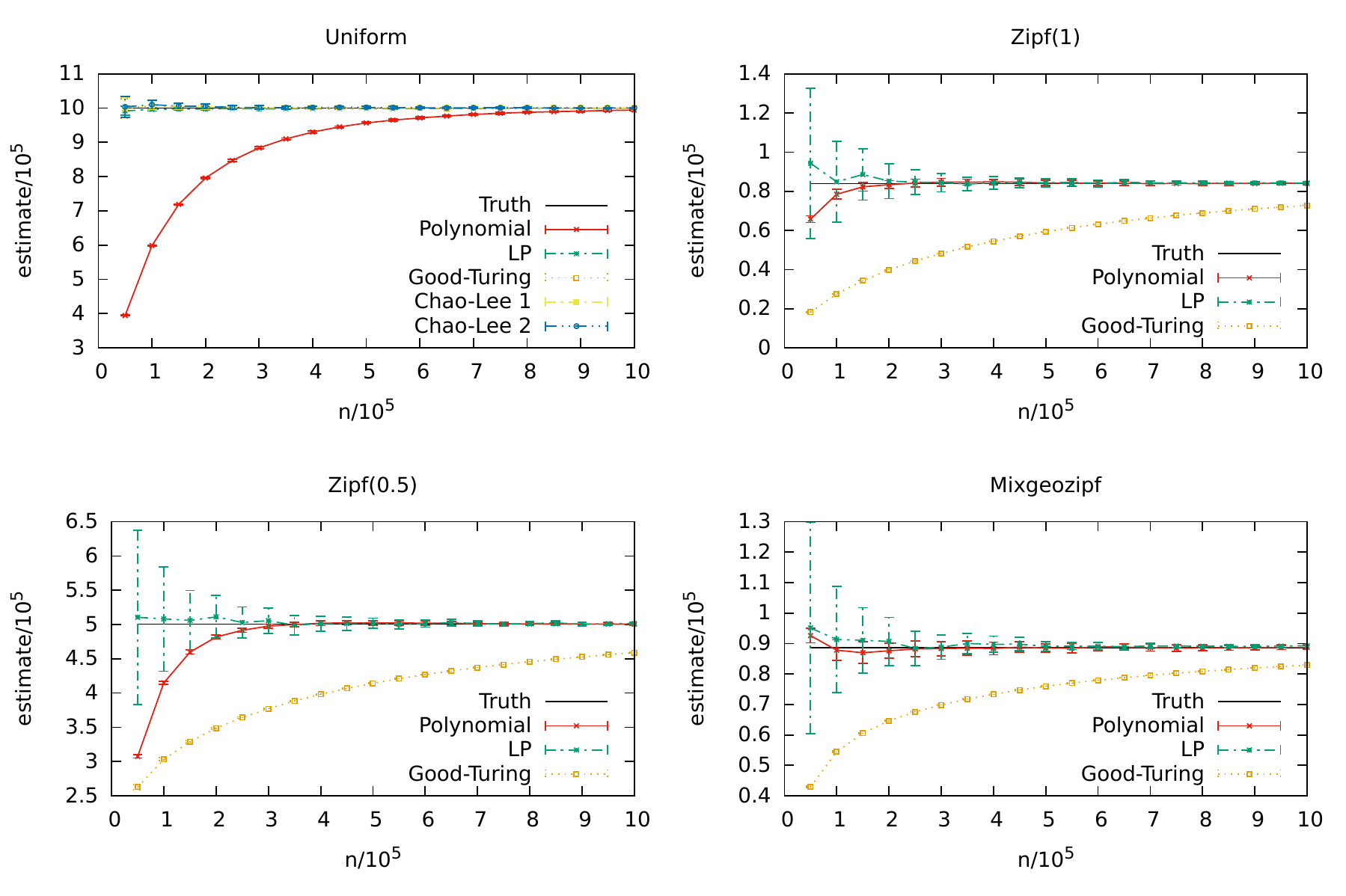}
    \caption{Performance comparison under four data-generating distributions.\label{fig:synthetic} }
\end{figure}
We compare our results with the Good-Turing estimator \cite{Good1953}, the two estimators proposed by Chao and Lee \cite{Chao92}, and the linear programming approach proposed by Valiant and Valiant \cite{VV13}.
Here the Good-Turing estimator refers to first estimate the total probability of seen symbols (sample coverage) by $\hat C=1- \frac{h_1}{n}$ then estimate the support size by $\hat S = {\Splug}/{\hat C}$.
The plug-in estimator simply counts the number of distinct elements observed, which is always outperformed by the Good-Turing estimator in our experiments and hence omitted in the comparison.

Good-Turing's estimate on sample coverage performs remarkably well in the special case of uniform distributions.
This has been noticed and analyzed in \cite{Chao92,DR80}.
Chao-Lee's estimators are based on Good-Turing's estimate with further correction terms for non-uniform distributions. 
However, with limited number of samples, if no symbol appears more than once, the sample coverage estimate $\hat C$ is zero and consequently the Good-Turing estimator and Chao-Lee estimators are not even well-defined.
For Zipf and mixture distributions, the output of Chao-Lee's estimators is highly unstable and thus is omitted from the plots;
the convergence rate of Good-Turing estimator is much slower than our estimator and the linear programming approach, partly because it only uses the information of how many symbols occurred exactly once, namely $h_1$, instead of the full spectrum of fingerprints $\{h_j\}_{j\geq 1}$;
the linear programming approach has similar convergence rate to ours but suffers large variance when samples are scarce.

\paragraph{Real data}
Next we evaluate our estimator by a real data experiment based on the text of \emph{Hamlet},
which contains about $ 32,000 $ words in total consisting of about $ 4,800 $ distinct words.
Here and below the definition of ``distinct word'' is any distinguishable arrangement of letters that are delimited by spaces, insensitive to cases, with punctuations removed.
We randomly sample the text with replacement and generate the fingerprints for estimation.
The minimum non-zero mass is naturally the reciprocal of the total number of words, $ \frac{1}{32,000} $.
In this experiment we use the degree-$ 4 $ Chebyshev polynomial.
We also compare our estimator with the one in \cite{VV13}.
The results are plotted in \prettyref{fig:estimation},
\begin{figure}[ht]
    \centering
    \includegraphics[width=0.9\linewidth]{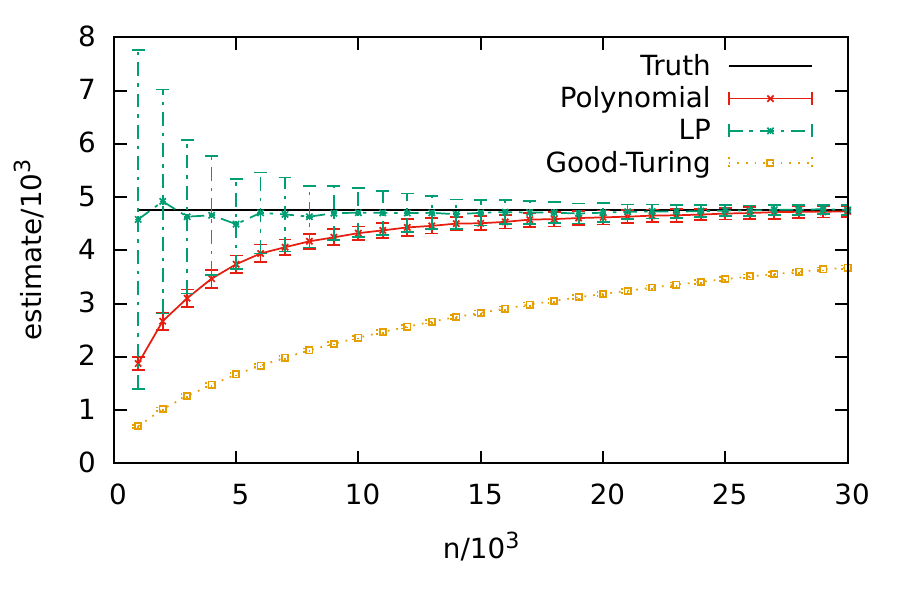}
    \caption{Comparison of various estimates of the total number of distinct words in \emph{Hamlet}. \label{fig:estimation} }
\end{figure}
which shows that the estimator in \cite{VV13} has similar convergence rate to ours; however, the variance is again much larger and the computational cost of linear programming is significantly higher than linear estimators, which amounts to computing linear combinations with pre-determined coefficients.

On a larger scale experiment we used the \emph{New York Times Corpus} from the years 1987 -- 2007.\footnote{Data available at \url{https://catalog.ldc.upenn.edu/LDC2008T19}.}
This corpus has a total of 25,020,626 paragraphs consisting of 996,640,544 words with 2,047,985 distinct words.
We randomly sample 1\% -- 50\% out of the all paragraphs with replacements and feed the fingerprint to our estimator.
The minimum non-zero mass is also the reciprocal of the total number of words, $ 1/10^9 $, and thus the degree-9 Chebyshev polynomial is applied.
\begin{figure}[ht]
    \centering
    \includegraphics[width=0.9\linewidth]{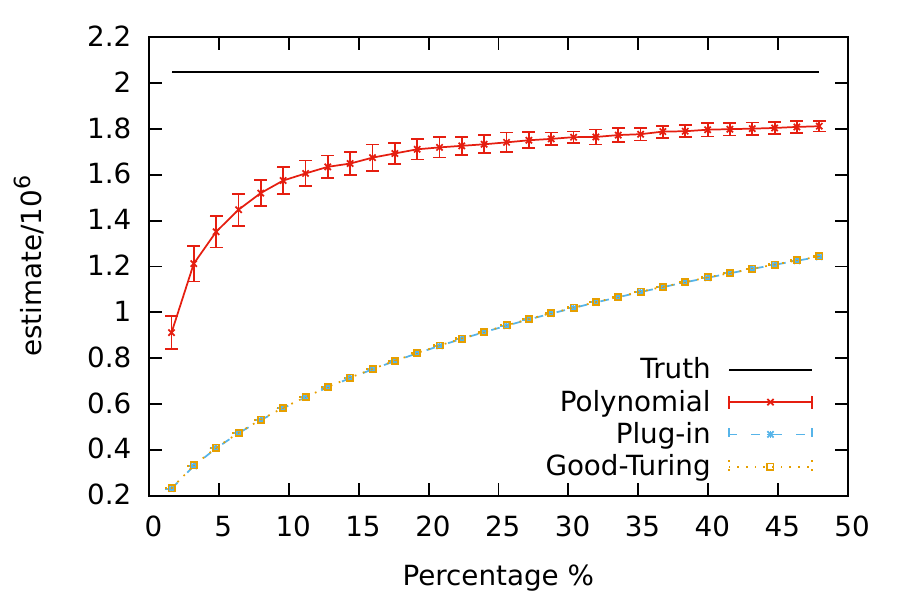}
    \caption{Performance of our estimator using \emph{New York Times Corpus}. \label{fig:NYT} }
\end{figure}
Using only 20\% samples our estimator achieves a relative error of about 10\%, which is a systematic error due to the model mismatch: the sampling here is paragraph by paragraph rather than word by word, which induces dependence across samples as opposed to the iid sampling model for which the estimator is designed.
For this large dataset the linear programming estimator has unbearable computational cost: Even for the data of a single year the linear programming takes over 100 hours to compute on a server with E5-2623 CPU and 96 GB RAM; in contrast, the proposed linear estimator takes less than 15 minutes to run for the entire 20-year dataset on the same computer, which clearly demonstrates its computational advantage even if one factors into the difference that our implementation is based on C++ instead of MATLAB used in \cite{VV13}.

Finally, we perform the classical experiment of ``how many words did Shakespeare know''. We feed the fingerprint of the entire Shakespearean canon (see \cite[Table 1]{ET76}), which contains 31,534 word types, to our estimator.
We choose the minimum non-zero mass to be the reciprocal of the total number of English words, which, according to known estimates, is between 600,000 \cite{OED} to 1,000,000 \cite{GLM}, and obtain an estimate of 63,148 to 73,460 for Shakespeare's vocabulary size, as compared to 66,534 obtained by Efron-Thisted \cite{ET76}.

\section{Proof of lower bounds}
\label{sec:pf-lb}

\subsection{Proof of \prettyref{prop:main-lb2}}
\begin{proof}
    For $0<\nu<1$, define the set of \emph{approximate} probability vectors by
    \begin{equation*}
        \calD_k(\nu) \triangleq \sth{P=(p_1,p_2,\dots): \abs{\sum_{i} p_i-1}\le \nu, p_i\in \sth{0}\cup \qth{\frac{1+\nu}{k},1} }.    
    \end{equation*}
    which reduces to the original probability distribution space $\calD_k$ if $\nu=0$.
    Generalizing the sample complexity $ n^*(k,\epsilon) $ in \prettyref{eq:nstar} to the Poisson sampling model over $ \calD_k(\nu) $, we define 
    \begin{equation}
        n^*(k,\epsilon,\nu) \triangleq \min\{n \geq 0\colon \exists \hat S, \text{ s.t. } \Prob[|\hat S - S(P)| \ge \epsilon k] \leq 0.1,  \forall P\in \calD_k(\nu) \},
        \label{eq:nstar-tilde}
    \end{equation}
    where  $\hat S$ is an integer-valued estimator measurable with respect to $ N=(N_1,N_2,\dots)\inddistr \Poi(np_i) $.    
    The sample complexity of the fixed-sample-size and Poissonized model is related 
    by the following lemma:
    \begin{lemma}
        \label{lmm:lb-generalize}
        For any $ \nu\in(0,1) $ and any $\epsilon \in (0,\frac{1}{2})$, 
        \begin{equation}
            n^*(k,\epsilon)\ge (1-\nu)\tilde{n}^*(k,\epsilon,\nu) \pth{1- O\pth{\frac{1}{\sqrt{(1-\nu)\tilde{n}^*(k,\epsilon,\nu)}}}}.
            \label{eq:lb-generalize}
        \end{equation}
    \end{lemma}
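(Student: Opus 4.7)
The plan is to produce, from any near-optimal fixed-sample-size estimator on $\calD_k$, a Poisson-sample estimator on the relaxed space $\calD_k(\nu)$, so that a lower bound on the latter transfers back to one on the former. The two parameter spaces are bridged by normalization: for $P\in\calD_k(\nu)$ set $\tilde P\triangleq P/\|P\|_1$. The bounds $p_i\ge(1+\nu)/k$ (when nonzero) and $\|P\|_1\le1+\nu$ together force $\tilde p_i\ge 1/k$, so $\tilde P\in\calD_k$ with $S(\tilde P)=S(P)$. Standard thinning/superposition properties of Poisson processes show that drawing $\Poi(\tilde N)$ samples from $P$ is equivalent in distribution to first drawing $M\sim\Poi(\tilde N\|P\|_1)$ and then $M$ i.i.d.~samples from $\tilde P$; in particular $\Expect[M]\ge(1-\nu)\tilde N$.

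Given a fixed $n$-sample estimator $\hat S$ on $\calD_k$ with error at most $0.1$ where $n=n^*(k,\epsilon)$, I would define a Poisson estimator $\hat S'$ on $\calD_k(\nu)$ as follows: if $M\ge n$, apply $\hat S$ to the first $n$ samples (which are i.i.d.~from $\tilde P$, so their distribution does not depend on $M$), and otherwise output $0$. A union bound then gives
\[
    \Prob[|\hat S'-S(P)|\ge\epsilon k]\le \Prob[|\hat S-S(\tilde P)|\ge\epsilon k]+\Prob[M<n]\le 0.1+\Prob[M<n].
\]
Setting $\tilde N\triangleq n/((1-\nu)(1-\beta))$ for some $\beta\in(0,1)$ and applying the Chernoff bound for the Poisson law yields $\Prob[M<n]\le \exp(-n\beta^2/(2(1-\beta)))$. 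Choosing $\beta=\Theta(1/\sqrt{n/(1-\nu)})$ makes this tail vanish, so $\tilde n^*(k,\epsilon,\nu)\le \tilde N$, and rearranging this in terms of $\tilde n^*$ produces exactly the stated inequality.

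The delicate point is that $\hat S$ only guarantees error $\le 0.1$ with no strict slack, whereas $\Prob[M<n]$ is always strictly positive, so the raw union bound cannot meet the target $0.1$ in the definition of $\tilde n^*$. My plan to resolve this is to invoke the median-of-estimators boost described in the footnote of \prettyref{sec:main}: a bounded number of median batches lowers the inner error strictly below $0.1$ at the cost of only an absolute constant factor in the sample count. That constant is absorbed into the $1-O(1/\sqrt{(1-\nu)\tilde n^*})$ correction in the asymptotic regime $\tilde n^*\to\infty$, which is precisely the regime relevant for the subsequent proof of \prettyref{prop:main-lb2}. Care is also needed to verify that $\tilde P\in\calD_k$ after normalization; this is exactly why the definition of $\calD_k(\nu)$ inflates the minimum-mass threshold from $1/k$ to $(1+\nu)/k$.
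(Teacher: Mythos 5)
Your core reduction is the same as the paper's: normalize $P\in\calD_k(\nu)$ to $\tilde P=P/\|P\|_1\in\calD_k$ (the inflated threshold $(1+\nu)/k$ indeed exists precisely so that $\tilde P\in\calD_k$), condition on the Poisson total count $M\sim\Poi(n\|P\|_1)\ge_{\rm s.t.}\Poi(n(1-\nu))$ under which the histogram is multinomial with parameter $\tilde P$, feed the data to a fixed-sample-size estimator when $M\ge n^*(k,\epsilon)$, and control $\Prob[M<n^*]$ via the Chernoff bound with $\beta=\Theta(1/\sqrt{n^*})$. Two minor remarks: with $\beta=\Theta(1/\sqrt{n})$ the Poisson tail is a small \emph{constant} $e^{-\Theta(1)}$ (small for large $C$), not a vanishing quantity; and the paper simply applies the $M$-sample estimator $\hat S_M$ to the observed histogram, which sidesteps your need to extract ``the first $n$ samples'' from unordered data (your subsampling variant also works, but is a randomized estimator).

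The genuine problem is your resolution of the ``delicate point.'' Median-of-$T$ aggregation needs $T\ge 3$ batches (more, to push the failure probability meaningfully below $0.1$), hence multiplies the sample budget by an absolute constant $T\ge 3$. A multiplicative constant cannot be absorbed into the factor $1-O(1/\sqrt{(1-\nu)\tilde n^*})$, which tends to $1$; your argument would only yield $n^*(k,\epsilon)\ge \frac{1-\nu}{T}\,\tilde n^*(k,\epsilon,\nu)(1-o(1))$, strictly weaker than the lemma, and it would destroy the sharp constants in \prettyref{prop:main-lb2} and in the left side of \prettyref{eq:sample-complexity-asymp}, which are the whole point of this careful bookkeeping. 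The correct way out is that no boosting is needed: the union bound gives error at most $0.1+0.9\,\Prob[M<n^*]$, and the lower bound on $\tilde n^*$ that this lemma is paired with (\prettyref{lmm:lb-poisson}, via Le Cam with total variation at most $0.6$) forces \emph{every} Poisson-sample estimator with fewer samples to err with probability at least $0.2$. Choosing the constant $C$ in $\beta=C/\sqrt{n^*}$ so that $0.9\,\Prob[M<n^*]<0.1$ already produces the contradiction; the slack lives in the downstream two-composite-hypothesis bound, not in an oversampled upper bound.
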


    To establish a lower bound of $ \tilde{n}^*(k,\epsilon,\nu) $, we apply generalized Le Cam's method involving two composite hypothesis.
    Given two random variables $ U,U'\in[0,k] $ with unit mean
    we can construct two random vectors by $ \sfP=  \frac{1}{k} (U_1,\ldots,U_k) $ and $ \sfP'=  \frac{1}{k} (U_1',\ldots,U_k') $ with \iid~entries.
    Then $ \Expect[S(\sfP)]-\Expect[S(\sfP')]= k(\Prob[U>0]-\Prob[U'>0]) $.
    Furthermore, both $S(\sfP)$ and $S(\sfP')$ are binomially distributed, which are tightly concentrated at the respective means. 
    We can reduce the problem to the separation on mean values, as shown in the next lemma:
    \begin{lemma}
        \label{lmm:lb-poisson}
        Let $ U,U'\in\sth{0}\cup[1+\nu,\lambda] $ be random variables such that $ \Expect[U]=\Expect[U']=1 $, $ \Expect[U^j]=\Expect[U'^j] $ for $ j\in [L] $, and $ |\Prob[U>0]-\Prob[U'>0]|= d $.
        Then, for any $ \alpha<1/2 $, 
        \begin{equation}
            \frac{2\lambda}{k\nu^2}+\frac{2}{k\alpha^2d^2} + k\pth{\frac{en\lambda}{2kL}}^L\le 0.6
            \Rightarrow 
            \tilde{n}^*\pth{k,\frac{(1-2\alpha)d}{2},\nu} \ge n.
            \label{eq:lb-poisson}
        \end{equation}
    \end{lemma}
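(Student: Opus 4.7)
The plan is to apply the two-hypothesis (``generalized'') Le Cam method, where the two composite hypotheses are Bayesian priors $\sfP,\sfP'$ obtained by tensorizing $U,U'$ over the $k$ coordinates. Concretely, set $\sfP=(U_1,\dots,U_k)/k$ with $U_i\iiddistr U$ and analogously $\sfP'$ using i.i.d.\ copies of $U'$. Under Poisson sampling, the histogram $(N_1,\dots,N_k)$ is marginally i.i.d.\ from the Poisson mixture $\Expect[\Poi(nU/k)]$ (resp.\ $\Expect[\Poi(nU'/k)]$). The goal is to establish: (i) both priors lie inside $\calD_k(\nu)$ with high probability; (ii) $S(\sfP)$ and $S(\sfP')$ concentrate around means separated by exactly $kd$; and (iii) the two data distributions have total variation bounded away from one.

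For (i), since $U\in\{0\}\cup[1+\nu,\lambda]$ has unit mean, $\Expect[U^2]\le \lambda$ and hence the variance of $U$ is at most $\lambda$; Chebyshev's inequality applied to $\frac{1}{k}\sum_i U_i$ yields $\Prob[\sfP\notin\calD_k(\nu)]\le \lambda/(k\nu^2)$, with the same bound for $\sfP'$, the per-coordinate support condition being inherited from the support of $U$. For (ii), set $p=\Prob[U>0]$ and $p'=\Prob[U'>0]$, so $|p-p'|=d$ and $S(\sfP)\sim\Binom(k,p)$. Chebyshev then gives
\[
\Prob\bigl[\abs{S(\sfP)-kp}>\alpha k d\bigr]\le \frac{1}{4k\alpha^2 d^2},
\]
and similarly for $S(\sfP')$. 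On the intersection of these four good events, $S(\sfP)$ and $S(\sfP')$ lie in intervals of radius $\alpha k d$ around two points separated by $kd$, and hence at distance at least $(1-2\alpha)kd$; consequently any estimator with tolerance $(1-2\alpha)d/2$ must correctly identify which of the two priors generated the data.

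For (iii), subadditivity of total variation under product measures gives
\[
\TV\pth{\Expect[\Poi(nU/k)]^{\otimes k},\,\Expect[\Poi(nU'/k)]^{\otimes k}}\le k\cdot \TV\pth{\Expect[\Poi(nU/k)],\,\Expect[\Poi(nU'/k)]}.
\]
For the marginal TV, write $V=nU/k\in[0,n\lambda/k]$ and expand $e^{-V}$ as a power series inside $\Expect[V^j e^{-V}]/j!$; the hypothesis $\Expect[U^m]=\Expect[U'^m]$ for $m\le L$ cancels every monomial of total degree at most $L$, leaving a tail bounded by $\sum_{m>L}(2n\lambda/k)^m/m!$ once the combinatorial factor $\sum_{i+j=m}1/(i!j!)=2^m/m!$ is collected. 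A Stirling estimate then controls this tail by $(en\lambda/(2kL))^L$ up to absorbed constants, which after multiplication by the factor $k$ from tensorization matches the third term in the lemma's hypothesis.

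Combining (i)--(iii) via the standard fuzzy Le Cam bound, the Bayes error of any test between the two priors at accuracy $(1-2\alpha)d/2$ is at least $\frac{1}{2}(1-\TV_{\text{data}}-\Prob[E^c]-\Prob[E'^c])$, where $E,E'$ are the intersections of the good events in (i) and (ii). The right-hand side exceeds $0.1$ precisely under the hypothesis of the lemma, which forces the existence of some $P\in\calD_k(\nu)$ with $\Prob[|\hat S-S(P)|\ge (1-2\alpha)kd/2]>0.1$, giving $\tilde n^*(k,(1-2\alpha)d/2,\nu)\ge n$. The main technical obstacle is step (iii): extracting the precise constant $en\lambda/(2kL)$ requires careful bookkeeping of the bivariate Taylor expansion together with a sharp Stirling estimate on the tail, and this is where the interplay between the range $\lambda$ of the priors and the matching degree $L$ is converted into a concrete lower bound on the sample complexity.
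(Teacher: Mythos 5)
Your overall architecture matches the paper's: construct $\sfP,\sfP'$ by tensorizing $U,U'$, condition on the good events that enforce membership in $\calD_k(\nu)$ and concentration of $S(\sfP),S(\sfP')$ (paying $\Prob[E^c]+\Prob[E'^c]$ via Chebyshev, which is where the first two terms of the hypothesis come from), use subadditivity of total variation over the $k$ coordinates, and invoke Le Cam. Steps (i), (ii) and the final assembly are fine.

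The genuine gap is in step (iii). The bivariate power-series computation you sketch does not yield the constant in the lemma. Expanding $e^{-V}$ and cancelling all monomials of degree at most $L$ leaves, as you say, a tail of order $\sum_{m>L}\frac{(2\Lambda)^m}{m!}$ with $\Lambda=n\lambda/k$; its leading term is $\frac{(2\Lambda)^{L+1}}{(L+1)!}$, which by Stirling is $\approx\pth{\frac{2e\Lambda}{L}}^{L}$, \emph{not} $\pth{\frac{e\Lambda}{2L}}^{L}$. Your claimed bound is therefore off by a factor of roughly $4^{L}$, and since $L\asymp\log k$ this is a polynomial-in-$k$ loss that changes the admissible range of $n$ and hence the constant in the final sample-complexity lower bound (the $\frac{1}{2e^2}$ in \prettyref{thm:sample} would degrade to $\frac{1}{8e^2}$). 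This is exactly the point of \prettyref{lmm:tv-bound}: the paper writes $\TV(\Expect[\Poi(V)],\Expect[\Poi(V')])\le\sum_j E_L(f_j,[0,\Lambda])$ with $f_j(x)=e^{-x}x^j/j!$, bounds each best-approximation error through the Chebyshev interpolation polynomial (which contributes the crucial extra factor $2^{-L}(\Lambda/2)^{L+1}$ rather than $(2\Lambda)^{L+1}$), and controls $\max_{[0,\Lambda]}|f_j^{(L+1)}|$ by $\binom{L+1}{j}$ via Laguerre-polynomial identities so that the sum over $j$ costs only $2^{L+1}$. A secondary issue with your tail bound: $\sum_{m>L}(2\Lambda)^m/m!$ is dominated by its first term only when $2\Lambda<L$, which is not guaranteed in the regime where the lemma is applied ($\Lambda$ can be as large as $2L/e$), so even the weaker bound needs more care. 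As written, your argument proves a version of the lemma with $\pth{\frac{2en\lambda}{kL}}^{L}$ in place of $\pth{\frac{en\lambda}{2kL}}^{L}$, which suffices for the order-optimal statement \prettyref{eq:sample-complexity} but not for the lemma as stated nor for the sharp constant in \prettyref{eq:sample-complexity-asymp}.
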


    Applying \prettyref{lmm:FL} in \prettyref{app:opt-UX}, we obtain two random variables $ U,U'\in \sth{0}\cup [1+\nu,\lambda] $ such that $ \Expect[U]=\Expect[U']=1 $, $ \Expect[U^j]=\Expect[U'^j],j=1,\dots,L $ and 
    \begin{equation*}
        \Prob[U>0]-\Prob[U'>0]=2E_{L-1}\pth{\frac{1}{x},[1+\nu,\lambda]}
        =\frac{\pth{1+\sqrt{\frac{1+\nu}{\lambda}}}^2}{1+\nu}\pth{1-\frac{2\sqrt{\frac{1+\nu}{\lambda}}}{1+\sqrt{\frac{1+\nu}{\lambda}}}}^{L}\triangleq  d,
    \end{equation*}
    where the value of $ E_{L-1}(\frac{1}{x},[1+\nu,\lambda]) $ follows from \cite[2.11.1]{timan63}.
    To apply \prettyref{lmm:lb-poisson} and obtain a lower bound of $ \tilde{n}^*(k,\epsilon,\nu) $, we need to pick the parameters depending on the given $ k $ and $ \epsilon $ to fulfill:
    \begin{align}
      &\frac{(1-2\alpha)d}{2}\ge \epsilon,\label{eq:separation-ref}\\
      &\frac{2\lambda}{k\nu^2}+\frac{2}{k\alpha^2d^2} + k\pth{\frac{en\lambda}{2kL}}^L \le 0.6\label{eq:tv-ref}.
    \end{align}
    
    Let
    \begin{align*}
      L=\floor{c_0\log k},\quad &\lambda=\pth{\frac{\gamma \log k}{\log(1/2\epsilon)}}^2,\quad n=C\frac{k}{\log k}\log^2\frac{1}{2\epsilon},\\
      \alpha=\frac{1}{k^{1/3}}, \quad &\nu=\sqrt{\sqrt{\lambda/k}(1-2\epsilon)},
    \end{align*}
    for some $ c_0, \gamma , C \asymp 1 $ to be specified, and by assumption $ L,\lambda\rightarrow\infty $, $ \frac{\alpha}{1-2\epsilon}=o_k(1) $, $ \frac{\nu}{1-2\epsilon}=o_\tau(1)+o_k(1) $, $ 1/\lambda=o_\delta(1) $ .
    Since $ d\ge \frac{1}{1+\nu}(1-2\sqrt{\frac{1+\nu}{\lambda}})^L $, a sufficient condition for \prettyref{eq:separation-ref} is that
    \begin{equation}
        \pth{1-2\sqrt{\frac{1+\nu}{\lambda}}}^{L}\ge 2\epsilon\frac{1+\nu}{1-2\alpha}
        \Leftrightarrow
        \frac{\gamma}{c_0}>2+o_\tau(1)+o_\delta(1)+o_k(1).
        \label{eq:separation-ref2}
    \end{equation}
    
    Now we consider \prettyref{eq:tv-ref}.
    By the choice of $ \nu $ and $ \alpha $, we have
    \begin{equation*}
        \nu\gg \sqrt{\lambda/k},\quad \alpha\gg 1/\sqrt{k}d,
    \end{equation*}
    since $ 1-2\epsilon\gg \frac{\sqrt{\log k}}{k^{1/4}} $, $ d\ge \frac{2\epsilon}{1-2\alpha} $ and $ \epsilon=k^{-o(1)} $.
    Then the first two terms in \prettyref{eq:tv-ref} vanish.
    The last term in \prettyref{eq:tv-ref} vanishes as long as the constant $ C<\frac{2c_0}{e\gamma^2}e^{-1/c_0} $.
    By the fact that
    \begin{equation*}
        \sup\sth{\frac{2c_0}{e\gamma^2}e^{-1/c_0}:0< 2c_0<\gamma}=\frac{1}{2e^2}, 
    \end{equation*}
    the optimal $ C $ satisfying \prettyref{eq:separation-ref2} is $ \frac{1+o_\delta(1)+o_\tau(1)+o_k(1)}{2e^2} $.
    Therefore, combining \prettyref{eq:separation-ref} -- \prettyref{eq:tv-ref} and applying \prettyref{eq:lb-poisson}, we obtain a lower bound of $ \tilde{n}^* $ that
    \begin{equation*}
        \tilde{n}^*(k,\epsilon,\nu)\ge \frac{1+o_\delta(1)+o_\tau(1)+o_k(1)}{2e^2}\frac{k}{\log k}\log^2\frac{1}{2\epsilon}.
    \end{equation*}
    Since $ 1-2\epsilon\gg \frac{\sqrt{\log k}}{k^{1/4}} $, we have $ \tilde{n}^*(k,\epsilon,\nu)\gg \sqrt{k} $.
    Applying \prettyref{lmm:lb-generalize}, we conclude the desired lower bound of $ n^*(k,\epsilon) $.    
\end{proof}

\subsection{Lower bound parts of Theorems \ref{thm:main} and \ref{thm:sample}}
\label{sec:pf-thm-lb}
\begin{proof}[Proof of lower bound of \prettyref{thm:sample}]
    The lower bound part of \prettyref{eq:sample-complexity-asymp} follows from \prettyref{prop:main-lb2}.
    Consequently, we obtain the lower bound part of \prettyref{eq:sample-complexity} for $ \frac{1}{k^c}\le\epsilon\le c_0 $ for the fixed constant $ c_0<1/2 $.
    
    The lower bound part of \prettyref{eq:sample-complexity} for $ \frac{1}{k}\le\epsilon\le \frac{1}{k^c} $ simply follows from the fact that $ \epsilon\mapsto n^*(k,\epsilon) $ is decreasing: 
    \begin{equation*}
        n^*(k,\epsilon)
        \ge n^*(k,1/k^c)
        \gtrsim k \log k
        \asymp \frac{k}{\log k}\log^2\frac{1}{\epsilon}.\qedhere
    \end{equation*}
\end{proof}

\begin{proof}[Proof of lower bound of \prettyref{thm:main}]
    By the Markov inequality,
    \begin{equation*}
        n^*(k,\epsilon)>n \Rightarrow R^*(k,n)>0.1\epsilon^2.
    \end{equation*}
    Therefore, our lower bound is
    \begin{equation*}
        R^*(k,n)\ge \sup\{0.1\epsilon^2:n^*(k,\epsilon)>n\}= 0.1\epsilon_*^{2},
    \end{equation*}
    where $ \epsilon_*\triangleq \{\epsilon:n^*(k,\epsilon)>n\} $.
    By the lower bound of $ n^*(k,\epsilon) $ in \prettyref{eq:main-lb2}, we obtain that
    \begin{equation*}
        \epsilon_*\ge \exp\pth{-\pth{\sqrt{2}e+o_\delta(1)+o_{\delta'}(1)+o_k(1)}\sqrt{\frac{n\log k}{k}}},
    \end{equation*}
    as $ \delta\triangleq \frac{n}{k\log k}\rightarrow 0 $, $ \delta'\triangleq \frac{k}{n\log k}\rightarrow 0 $, and $ k\diverge $.
    Then we conclude the lower bound part of \prettyref{eq:main-asymp}, which implies the lower bound part of \prettyref{eq:main} when $ n\lesssim k\log k $.

    For the lower bound part of \prettyref{eq:main} when $ n\gtrsim k\log k $, we apply Le Cam's two-point method \cite{Lecam86} by considering two possible distributions, namely $ P=\Bern(0)$ and $ Q=\Bern(\frac{1}{k})$.
    Then 
    \begin{equation*}
        R^*(k,n)\ge \frac{1}{4}(S(P)-S(Q))^2\exp(-nD(P\|Q))
        = \frac{k^2}{4}\exp\pth{n\log\pth{1-\frac{1}{k}}-2\log k}.
    \end{equation*}
    Since $ n\gtrsim k\log k $, we have $ n\log\pth{1-\frac{1}{k}}-2\log k\gtrsim -\frac{n}{k} $.
\end{proof}

\subsection{Proof of lemmas}
    \begin{proof}[Proof of \prettyref{lmm:lb-generalize}]
        Fix an arbitrary $P=(p_1,p_2,\ldots)\in \calD_k(\nu)$.
        Let $N=(N_1,N_2,\ldots)\inddistr \Poi(n p_i)$ and let $n' = \sum N_i \sim \Poi(n \sum p_i)\ge_{\text s.t.} \Poi(n(1-\nu))$.
        Let $\hat{S}_n$ be the optimal estimator of support size for fixed sample size $n$, such that whenever $ n\ge n^*(k,\epsilon) $ we have $ \Prob[|\hat S_n-S(P)|\ge \epsilon k]\le 0.1 $ for any $ P\in \calD_k $.
        We construct an estimator for the Poisson sampling model by $ \tilde{S}(N)= \hat{S}_{n'}(N). $ 
        We observe that conditioned on $ n'=m $, $ N\sim \Multinom( m, \frac{P}{\sum_i p_i} ) $.
        Note that $ \frac{P}{\sum_i p_i}\in\calD_k $ by the definition of $ \calD_k(\nu) $.
        Therefore 
        \begin{align*}
          \prob{ \abs{\tilde{S}(N)-S(P)}\ge \epsilon k }
          & = \sum_{m=0}^{\infty} \prob{ \abs{\hat{S}_m(N)-S\pth{ \frac{P}{\sum_ip_i} } }\ge \epsilon k } \prob{n'=m} \\ 
          &\le 0.1 \,\Prob[n'\ge n^*] +  \Prob[n'< n^*]
            =0.1 + 0.9\,\Prob[n'< n^*] \\
          &\le  0.1 + 0.9\,\Prob[\Poi(n(1-\nu))<n^*]. 
        \end{align*}
        If $ n= \frac{1+\beta}{1-\nu}n^* $ for $ \beta>0 $, then Chernoff bound (see, \eg, \cite[Theorem 5.4]{MU06}) yields that
        \begin{equation*}
            \Prob[\Poi(n(1-\nu))<n^*]
            \le \exp(-n^*(\beta-\log (1+\beta))).
        \end{equation*}
        By picking $ \beta=\frac{C}{\sqrt{n^*}} $ for some absolute constant $ C $, we obtain $ \tilde{n}^*\le \frac{n^*+C\sqrt{n^*}}{1-\nu} $ and hence
        the lemma.
    \end{proof}

   \begin{proof}[Proof of \prettyref{lmm:lb-poisson}]
        Define two random vectors 
        \begin{equation*}
            \sfP=\pth{ \frac{U_1}{k},\dots,\frac{U_k}{k} } ,
            \quad \sfP'=\pth{ \frac{U_1'}{k},\dots,\frac{U_k'}{k} },
        \end{equation*}
        where $ U_i$ and $U_i' $ are \iid copies of $ U$ and $U' $, respectively.
        Conditioned on $ \sfP $ and $ \sfP' $ respectively, the corresponding histogram $N=(N_1,\ldots,N_k)\inddistr \Poi(n U_i/k)$ and $N' =(N'_1,\ldots,N'_k)\inddistr \Poi(n U_i'/k)$.
        Define the following high-probability events: for $ \alpha<1/2 $,
        \begin{align*}
            E\triangleq & \sth{\abs{\frac{\sum_iU_i}{k}-1}\le \nu, \abs{S(\sfP)-\expect{S(\sfP)}}\le \alpha kd } , \\
            E'\triangleq & \sth{\abs{\frac{\sum_iU_i'}{k}-1}\le \nu, \abs{S(\sfP')-\expect{S(\sfP')}}\le \alpha kd } .
        \end{align*}
        Now we define two priors on the set $\calD_k(\nu)$ by the following conditional distributions:
        \begin{equation*}
            \pi = P_{\sfP|E},\quad\pi'=P_{\sfP'|E'}.
        \end{equation*}

        First we consider the separation of the support sizes under $ \pi$ and $\pi' $.
        Note that $ \Expect[S(\sfP)]=k\Prob[U>0] $ and $ \Expect[S(\sfP')]=k\Prob[U'>0] $, so $ |\Expect[S(\sfP)]-\Expect[S(\sfP')]|\ge kd $.
        By the definition of the events $ E,E' $ and the triangle inequality, we obtain that under $ \pi$ and $\pi' $, both $ \sfP,\sfP'\in \calD_k(\nu) $ and
        \begin{equation}
            |S(\sfP)-S(\sfP')|\ge (1-2\alpha)kd.
            \label{eq:lb-poisson-sep}
        \end{equation}
        
        Now we consider the total variation distance of the distributions of the histogram under the priors $ \pi$ and $\pi' $. 
        By the triangle inequality and the fact that total variation of product distribution can be upper bounded by the summation of individual one,
        \begin{align}
          \TV ( P_{N|E}, P_{N'|E'} )
          & \le \TV ( P_{N|E}, P_N ) + \TV ( P_{N}, P_{N'} ) + \TV ( P_{N'} , P_{N'|E'} ) \nonumber\\
          & = \Prob[E^c] + \TV \pth{(\Expect[\Poi(nU/k)])^{\otimes k}, (\Expect[\Poi(nU'/k)])^{\otimes k} } + \Prob[E'^c]\nonumber\\
          & \le \Prob[E^c] + \Prob[E'^c] + k\TV(\Expect[\Poi(nU/k)], \Expect[\Poi(nU'/k)]).
            \label{eq:tv-ref1}
        \end{align}
        By the Chebyshev's inequality and the union bound, both
        \begin{align}
          \Prob[E^c],\Prob[E'^c]
          &\le \prob{\abs{\sum_i\frac{U_i}{k}-1}> \nu}+\prob{\abs{S(\sfP)-\expect{S(\sfP)}}> \alpha kd}\nonumber\\
          &\le \frac{\sum_i\var[U_i]}{(k\nu)^2}+\frac{\sum_i\var[\indc{U_i>0}]}{(\alpha kd)^2}
            \le \frac{\lambda}{k\nu^2}+\frac{1}{k\alpha^2d^2},
            \label{eq:tv-ref2}
        \end{align}
        where we upper bounded the variance of $ U $ by $ \var[U]\le \Expect[U^2]\le \Expect[\lambda U]=\lambda $.

        Applying the total variation bound for Poisson mixtures in \prettyref{lmm:tv-bound} (see \prettyref{app:tv-bound})  yields that
        \begin{equation}
            \TV(\Expect[\Poi(nU/k)], \Expect[\Poi(nU'/k)])
            \le \pth{\frac{en\lambda}{2kL}}^L.
            \label{eq:tv-ref3}
        \end{equation}
        Plugging \prettyref{eq:tv-ref2} and \prettyref{eq:tv-ref3} into \prettyref{eq:tv-ref1}, we obtain that
        \begin{equation}
            \TV ( P_{N|E}, P_{N'|E'} )
            \le \frac{2\lambda}{k\nu^2}+\frac{2}{k\alpha^2d^2} + k\pth{\frac{en\lambda}{2kL}}^L.
            \label{eq:lb-poisson-tv}
        \end{equation}
        Applying Le Cam's lemma \cite{Lecam86}, the conclusion follows from \prettyref{eq:lb-poisson-sep} and \prettyref{eq:lb-poisson-tv}.
    \end{proof}

\section{Proof of upper bounds}
\label{sec:pf-ub}
\subsection{Proof of Propositions \ref{prop:main-ub-plug} and \ref{prop:main-ub-poly}}

\begin{proof}[Proof of \prettyref{prop:main-ub-plug}]
    First we consider the bias:
    \begin{align*}
      |\Expect(\Splug(P)-S(P))|
      = &~\sum_{i}(1-\Prob(N_i\ge 1))\indc{p_i\ge \frac{1}{k}}
      = \sum_{i}\exp(-np_i)\indc{p_i\ge \frac{1}{k}}\\
      \le &~ k\exp(-n/k).
    \end{align*}
    The variance satisfies
    \begin{align*}
      \var [\Splug(P)]
      = \sum_{i}\var \indc{N_i>0} \indc{p_i\ge \frac{1}{k}}
      \le \sum_{i} \exp(-np_i)\indc{p_i\ge \frac{1}{k}} \le k\exp(-n/k).
    \end{align*}
    The conclusion follows.

    For the negative result, under the Poissonized model and with the samples drawn from the uniform distribution, the plug-in estimator $ \Splug$ is distributed as $\Binom(k,1-e^{-n/k}) $.
    If $ n\le (1-\delta)k\log\frac{1}{\epsilon}<k\log\frac{1}{\epsilon} $, then $ 1-e^{-n/k}<1-\epsilon $.
    By the Chernoff bound,
    \begin{align*}
        \Prob[|\Splug-S(P)|\le \epsilon k]
        = &~ \Prob[\Binom(k,1-e^{-n/k})\ge (1-\epsilon)k]\\
        \le &~ e^{-kd(1-\epsilon\|1-e^{-n/k})}
        = e^{-kd(\epsilon\|e^{-n/k})},
    \end{align*}
    where $ d(p\|q)\triangleq p\log\frac{p}{q}+(1-p)\log\frac{1-p}{1-q} $ is the binary divergence function.
    Since $ e^{-n/k}\ge \epsilon^{1-\delta}> \epsilon $,
    \begin{equation*}
        d(\epsilon\|e^{-n/k})\ge d(\epsilon\|\epsilon^{1-\delta})\ge d(k^{-1}\|k^{-1+\delta})\asymp k^{-1+\delta},
    \end{equation*}
    where the middle inequality follows from the fact that $\epsilon \mapsto d(\epsilon\|\epsilon^{1-\delta})$ is increasing near zero.
    Therefore $ \Prob[|\Splug-S(P)|\le \epsilon k]\le \exp(-\Omega(k^\delta)) $.
\end{proof}

\begin{proof}[Proof of \prettyref{prop:main-ub-poly}]
    
    First we consider the bias. Recall that $    L =\floor{c_0\log k}, r=\frac{c_1\log k}{n}, l=\frac{1}{k}$.
    By \prettyref{eq:bias-term} the bias of $ \hat{S} $ is the summation of
    \begin{equation*}
        b(p_i) \triangleq e^{-np_i}P_L(p_i)\indc{p_i>0}.
    \end{equation*}
    Obviously $ b(0)=0 $.
    If $ l\le x\le r $ then $ |P_L(x)|\le \frac{1}{|T_L(-\frac{r+l}{r-l})|}=\frac{1}{|T_L(-\frac{1+\delta}{1-\delta})|} $ by the design of $ P_L $ in \prettyref{eq:PL}.
    Therefore $ |b(x)|\le e^{-nl}/|T_L(-\frac{1+\delta}{1-\delta})| $;
    if $ r<x\le 1 $,
    \begin{equation}
        |b(x)|\le \max_{r<x\le 1}e^{-nx}|P_L(x)|
        = \max_{1<y\le \frac{2-r-l}{r-l}}\exp(-nr(1-\delta)y/2)T_L(y)\frac{\exp(-nr(1+\delta)/2)}{|T_L(-\frac{1+\delta}{1-\delta})|}.
        \label{eq:bias-ref1}
    \end{equation}
    We need the following lemma:
    \begin{lemma}
         If $\beta = O(L)$, then
        \begin{equation}
            \max_{x\ge 1}e^{-\beta x}T_L(x)=\frac{1}{2}\pth{\frac{\alpha+\sqrt{\alpha^2+1}}{e^{\sqrt{1+1/\alpha^2}}}(1+o_{L}(1))}^L, \quad L\diverge,
            \label{eq:max-exp-cheby}
        \end{equation}
        where $\alpha\triangleq \frac{L}{\beta}$.
        \label{lmm:max-exp-cheby}
    \end{lemma}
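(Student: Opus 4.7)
The plan is to reduce the maximization to a tractable one-parameter calculus problem via the hyperbolic substitution $x=\cosh t$, $t\ge 0$, which transforms the constraint $x\ge 1$ into $t\ge 0$ and, combined with \prettyref{eq:cheby}, yields the identity $T_L(\cosh t)=\tfrac12(e^{Lt}+e^{-Lt})=\cosh(Lt)$. Writing $F(t)\triangleq e^{-\beta\cosh t}\cosh(Lt)$ and
\[
\log F(t)=-\beta\cosh t+Lt-\log 2+\log\pth{1+e^{-2Lt}},
\]
I would separate the two $O(L)$ leading terms $\Phi(t)\triangleq -\beta\cosh t+Lt$ from the remaining $O(1)$ corrections and handle them separately.

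The core step is to maximize $\Phi$, which is strictly concave on $[0,\infty)$ since $\Phi''(t)=-\beta\cosh t<0$. Solving $\Phi'(t)=L-\beta\sinh t=0$ yields the unique critical point $t^*=\log\pth{\alpha+\sqrt{\alpha^2+1}}$, at which $\sinh t^*=\alpha$ and $\cosh t^*=\sqrt{1+\alpha^2}$. Using $\beta\sqrt{1+\alpha^2}=(L/\alpha)\sqrt{1+\alpha^2}=L\sqrt{1+1/\alpha^2}$ gives
\[
\Phi(t^*)=L\log\frac{\alpha+\sqrt{\alpha^2+1}}{\exp\sqrt{1+1/\alpha^2}},
\]
whose exponential is precisely the base of \prettyref{eq:max-exp-cheby} raised to the $L$-th power. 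The hypothesis $\beta=O(L)$ forces $\alpha\ge c>0$ for some absolute constant $c$, so $t^*\ge\log(c+\sqrt{c^2+1})>0$ is bounded below uniformly, which will be needed in the next step.

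The remaining task is to show that the two $O(1)$ correction terms, together with the small shift between the exact maximizer $\hat t$ of $\log F$ and the approximate maximizer $t^*$ of $\Phi$, contribute only a multiplicative factor of $\tfrac12(1+o_L(1))$ to $\max_t F$, which is then absorbed as $\tfrac12(1+o_L(1))^L$ using $(1+\eta)^{1/L}=1+o_L(1)$ whenever $\eta=o(1)$. The uniform lower bound on $t^*$ makes $e^{-2Lt^*}$ exponentially small in $L$, so $\cosh(Lt^*)=\tfrac12 e^{Lt^*}(1+o_L(1))$ and hence $F(t^*)=\tfrac12 e^{\Phi(t^*)}(1+o_L(1))$. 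A one-line perturbation argument using $|\Phi''(t^*)|=L\sqrt{1+1/\alpha^2}\asymp L$ against a perturbation with gradient of order $O(Le^{-2Lt^*})$ at $t^*$ then shows $|\hat t-t^*|=O(e^{-2Lt^*})$ and $F(\hat t)=F(t^*)(1+o_L(1))$. I expect the main delicacy to be the uniformity of all these $o_L(1)$ terms across the full admissible regime of $\beta$; the uniform lower bound on $\alpha$ (and hence on $t^*$) coming from $\beta=O(L)$ is precisely what supplies it, and without this uniformity one would not be able to feed \prettyref{eq:max-exp-cheby} into the bias estimate \prettyref{eq:bias-ref1} of \prettyref{prop:main-ub-poly}.
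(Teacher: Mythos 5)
Your argument is correct and takes essentially the same route as the paper's proof: the hyperbolic substitution $x=\cosh t$, reduction to the critical-point equation $\sinh t\approx\alpha$, and evaluation at $t^*=\log(\alpha+\sqrt{\alpha^2+1})$ using the uniform lower bound on $\alpha$ (hence on $t^*$) supplied by $\beta=O(L)$. The only cosmetic difference is that the paper locates the exact maximizer of $e^{-\beta x}T_L(x)$ via monotonicity of $y\mapsto\tanh(Ly)/\sinh(y)$ and then approximates it by $\sqrt{1+\alpha^2}$, whereas you maximize the concave leading term exactly and control the exponentially small correction by perturbation; both yield the stated $(1+o_L(1))^L$ factor.
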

    Applying \prettyref{lmm:max-exp-cheby} to \prettyref{eq:bias-ref1} 
    with $ L=\floor{c_0\log k} $, $ \beta = nr(1-\delta)/2 $ and $ \alpha=2\rho+o(1) $ where $ \rho\triangleq c_0/c_1 $, we obtain that
    \begin{align*}
      |b(x)|&\le \frac{1}{2}\pth{\frac{2\rho+\sqrt{(2\rho)^2+1}}{e^{\sqrt{1+1/(2\rho)^2}}}(1+o_{k}(1))}^L\frac{\exp(-\frac{L}{2\rho} (1+o_\delta(1)))}{|T_L(-\frac{1+\delta}{1-\delta})|}\\
            &= \frac{1}{2}\pth{\frac{2\rho+\sqrt{(2\rho)^2+1}}{e^{\sqrt{1+1/(2\rho)^2}+1/(2\rho)}}(1+o_{k}(1)+o_\delta(1))}^L\frac{1}{|T_L(-\frac{1+\delta}{1-\delta})|}.
    \end{align*}
    Therefore $ b(p_i) $ is uniformly bounded by $ \frac{1+o_{k}(1)+o_\delta(1)}{|T_L(-\frac{1+\delta}{1-\delta})|} $ as long as we pick the constant $ \rho $ such that $ \frac{2\rho+\sqrt{(2\rho)^2+1}}{e^{\sqrt{1+1/(2\rho)^2}+1/(2\rho)}}<1 $, or equivalently, $ \rho<\rho^* \approx 1.1 $.
    Then the bias of $ \hat{S} $ is at most
    \begin{align}
      |\Expect[\hat{S}-S]|
      &\le k\frac{1+o_{k}(1)+o_\delta(1)}{|T_L(-\frac{1+\delta}{1-\delta})|}
      \le 2k(1+o_{k}(1)+o_\delta(1))\pth{1-\frac{2\sqrt{\delta}}{1+\sqrt{\delta}}}^L\nonumber\\
      &= 2k(1+o_{k}(1))\exp\pth{-(1+o_\delta(1))\sqrt{4c_0\rho\frac{n\log k}{k}}}.
        \label{eq:main-ub-poly-bias}
    \end{align}

    Now we turn to the variance of $ \hat{S} $:
    \begin{align*}
      \var[\hat{S}]
      &=\sum_{i:p_i>0}\var\qth{(g_L(N_i)-1)\indc{N_i\le L}} \nonumber \\
      &\le \sum_{i:p_i>0}\Expect\qth{(g_L(N_i)-1)^2\indc{N_i\le L}} \\
        &=\sum_{i:p_i>0}\sum_{j=0}^{L}\pth{\frac{a_jj!}{n^j}}^2e^{-np_i}\frac{(np_i)^j}{j!} 
        = \sum_{j=0}^{L}\pth{\frac{a_jj!}{n^j}}^2\Expect[h_j],
    \end{align*}
    where $ h_j\triangleq \sum_i \indc{N_i=j} $ is the fingerprint of samples.
    By definition $ \Expect[\sum_j j h_j]=n $.
    Therefore
    \begin{align}
      \var[\hat{S}]
      \le \Expect[h_0]+\sum_{j=1}^{L}\frac{(a_jj!/n^j)^2}{j}\Expect[jh_j]
      \le k+nL\max_{1\le j\le L}\frac{(a_jj!/n^j)^2}{j}.
      \label{eq:var-coeff-ref0}
    \end{align}
    Recall the polynomial coefficients $ a_j $ given in \prettyref{eq:aj}:
    \begin{align*}
      |a_j|=\pth{\frac{2}{r-l}}^j\frac{1}{j!}\frac{|T_L^{(j)}(-\frac{r+l}{r-l})|}{|T_L(-\frac{r+l}{r-l})|}.
    \end{align*}
    Applying Markov brothers' inequality \cite{markov1892} on the scaled interval $ [-\frac{r+l}{r-l},\frac{r+l}{r-l}] $, we obtain that 
    \begin{align}
      \abs{\frac{j!}{n^j}a_j}
      \le \frac{j!}{n^j}\pth{\frac{2}{r-l}}^j\frac{1}{j!}\frac{2^jj!}{|\frac{r+l}{r-l}|^j}\frac{L}{L+j}\binom{L+j}{2j}
      = \pth{\frac{4}{n(r+l)}}^jj!\frac{L}{L+j}\binom{L+j}{2j}.
      \label{eq:var-coeff-ref1}
    \end{align}
    We use the following bound on binomial coefficients \cite[Lemma 4.7.1]{ash-itbook}:
    \begin{align}
      \frac{\sqrt{\pi}}{2} \leq \frac{\binom{n}{k}}{(2 \pi n \lambda (1-\lambda))^{-1/2}  \exp(n h(\lambda))} \leq 1  .
      \label{eq:ash2}
    \end{align}
    where $\lambda = \frac{k}{n} \in (0,1)$ and  $h(\lambda) \triangleq -\lambda \log \lambda - (1-\lambda) \log (1-\lambda)$ denotes the binary entropy function.
    Therefore, from \prettyref{eq:var-coeff-ref1} and \prettyref{eq:ash2}, for $ j=1,\dots,L-1 $,
    \begin{align}
      \abs{\frac{a_jj!}{n^j}}
      \le &~\pth{\frac{4}{n(r+l)}}^jj!\frac{L}{L+j}\frac{\exp((L+j)h(\frac{2j}{L+j}))}{\sqrt{2\pi\cdot 2j\frac{L-j}{L+j}}} \nonumber \\
      \le &~\pth{\frac{4}{nr}}^j\frac{j!}{2}\exp\pth{(L+j)h\pth{\frac{2j}{L+j}}},
      \label{eq:var-coeff-ref2}
    \end{align}
    where we used the fact that $ \max_{j\in [L-1]}\frac{L}{\sqrt{4\pi j(L-j)(L+j)}}= \frac{L}{\sqrt{4\pi(L^2-1)}}\le \frac{1}{2} $ for $ L\ge 2 $.
    From \prettyref{eq:var-coeff-ref1}, the upper bound \prettyref{eq:var-coeff-ref2} also holds for $ j=L $.
    Using \prettyref{eq:var-coeff-ref2} and Striling's approximation that $ n!<e\sqrt{n}(\frac{n}{e})^n $,
    \begin{align}
      \frac{(a_jj!/n^j)^2}{j}
      &\le \frac{1}{j}\pth{\frac{4}{c_1\log k}}^{2j}\pth{\frac{e\sqrt{j}}{2}}^2\pth{\frac{j}{e}}^{2j}\exp\pth{2(L+j)h\pth{\frac{2j}{L+j}}}\nonumber\\
      &= \frac{e^2}{4}k^{2c_0(\beta\log\frac{4\rho \beta}{e}+(1+\beta)h(\frac{2\beta}{1+\beta}))}
        \le \frac{e^2}{4}k^{2c_0\tau(\rho)},\label{eq:var-coeff-ref3}
    \end{align}
    where $ \beta\triangleq j/L $ and $ \tau(\rho)\triangleq \sup_{\beta\in[0,1]}(\beta\log\frac{4\rho \beta}{e}+(1+\beta)h(\frac{2\beta}{1+\beta})) $, which occurs at $ \beta=\frac{\sqrt{1+4\rho^2}-1}{2\rho} $.
    Note that from \prettyref{eq:main-ub-poly-bias} the squared bias of $ \hat{S} $ is $ 4k^{2-o_\delta(1)} $; from \prettyref{eq:var-coeff-ref0} and \prettyref{eq:var-coeff-ref3} the variance of $ \hat{S} $ is at most
    \begin{equation}
        \var[\hat{S}]\le k+\frac{e^2}{4}nLk^{2c_0\tau(\rho)},
        \label{eq:main-ub-poly-var}
    \end{equation}
    which is $ \frac{e^2}{4}k^{1+2c_0\tau(\rho)+o_\delta(1)} $.
    Therefore as long as we pick constant $ c_0 $ such that $ 2c_0\tau(\rho)<1 $ the variance of $ \hat{S} $ in \prettyref{eq:main-ub-poly-var} is lower order than the squared bias of $ \hat{S} $ in \prettyref{eq:main-ub-poly-bias}, and thus the MSE of $ \hat{S} $ is at most
    \begin{equation*}
        \Expect(\hat{S}-S)^2\le 4k^2(1+o_{k}(1))\exp\pth{-2(1+o_\delta(1))\sqrt{\frac{2\rho}{\tau(\rho)}\frac{n\log k}{k}}}.
    \end{equation*}
    The conclusion follows from the fact that $\sup_{\rho < \rho^*} 2 \rho/\tau(\rho) \approx 2.494 $, which corresponds to choosing
    $ c_0 \approx 0.558 $ and $ c_1=0.5 $.
\end{proof}

\subsection{Upper bound parts of Theorems \ref{thm:main} and \ref{thm:sample}}
\label{sec:pf-thm-ub}
\begin{proof}[Proof of upper bound of \prettyref{thm:main}]
    Combining \prettyref{lmm:ub-poisson} and \prettyref{prop:main-ub-poly} yields the upper bound part of \prettyref{eq:main-asymp}, which also implies the upper bound of \prettyref{eq:main} when $ n\lesssim k\log k $.
    The upper bound part of \prettyref{eq:main} when $ n\gtrsim k\log k $ follows from \prettyref{prop:main-ub-plug}.
\end{proof}

\begin{proof}[Proof of upper bound of \prettyref{thm:sample}]
    By the Markov inequality,
    \begin{equation}
        R^*(k,n)\le 0.1\epsilon^2 \Rightarrow n^*(k,\epsilon)\le n.
        \label{eq:markov}
    \end{equation}
    Therefore our upper bound is
    \begin{equation*}
        n^*(k,\epsilon)\le \inf\{n:R^*(k,n)\le 0.1\epsilon^2\}.
    \end{equation*}
    By the upper bound of $ R^*(k,n) $ in \prettyref{eq:main-ub-poly}, we obtain that
    \begin{equation*}
        n^*(k,\epsilon)\le \frac{1+o_{\delta'}(1)+o_\epsilon(1)+o_k(1)}{\kappa}\frac{k}{\log k}\log^2\frac{1}{\epsilon}
    \end{equation*}
    as $ \delta'\triangleq\frac{\log(1/\epsilon)}{\log k}\triangleq 0 $, $ \epsilon\rightarrow 0 $, and $ k\diverge $.
    Consequently, we obtain the upper bound part of \prettyref{eq:sample-complexity} when $ \frac{1}{k^{c}}\le \epsilon\le c_0 $ for the fixed constant $ c_0<1/2 $.
    
    The upper bound part of \prettyref{thm:sample} when $ \frac{1}{k}\le \epsilon\le \frac{1}{k^{c}} $ follows from the monotonicity of $ \epsilon\mapsto n^*(k,\epsilon) $ that
    \begin{equation*}
        n^*(k,\epsilon)
        \le n^*(k,1/k) \le 3 k\log k \asymp \frac{k}{\log k}\log^2\frac{1}{\epsilon},
    \end{equation*}
    where the middle inequality follows from \prettyref{prop:main-ub-plug} and \prettyref{eq:markov}.
\end{proof}

\subsection{Proof of lemmas}
\begin{proof}[Proof of \prettyref{lmm:ub-poisson}]
    We follow the same idea as in \cite[Appendix A]{WY14} using the Bayesian risk as a lower bound of the minimax risk with a more refined application of the Chernoff bound.
    We express the risk under the Poisson sampling as a function of the original samples that
    \begin{equation*}
        \tilde{R}^*(k,(1-\beta)n)
        =\inf_{\{\hat{S}_m\}}\sup_{P\in\calD_k}\Expect[\ell(\hat{S}_{n'},S(P))],
    \end{equation*}
    where $ n'\sim\Poi((1-\beta)n) $.
    The Bayesian risk is a lower bound of the minimax risk:
    \begin{equation}
        \tilde{R}^*(k,(1-\beta)n)
        \ge \sup_\pi\inf_{\{\hat{S}_m\}}\Expect[\ell(\hat{S}_{n'},S(P))],
        \label{eq:bayesian-ref1}
    \end{equation}
    where $\pi$ is a prior over the parameter space $\calD_k$. For any sequence of estimators $ \{\hat{S}_m\} $,
    \begin{equation*}
        \Expect[\ell(\hat{S}_{n'},S)]
        =\sum_{m\ge 0}\Expect[\ell(\hat{S}_{m},S)]\Prob[n'=m]
        \ge \sum_{m=0}^{n}\Expect[\ell(\hat{S}_{m},S)]\Prob[n'=m].
    \end{equation*}
    Taking infimum of both sides, we obtain
    \begin{equation*}
        \inf_{\{\hat{S}_m\}}\Expect[\ell(\hat{S}_{n'},S)]
        \ge \inf_{\{\hat{S}_m\}}\sum_{m=0}^{n}\Expect[\ell(\hat{S}_{m},S)]\Prob[n'=m]
        = \sum_{m=0}^{n}\inf_{\hat{S}_m}\Expect[\ell(\hat{S}_{m},S)]\Prob[n'=m].
    \end{equation*}
    Note that for any fixed prior $ \pi $, the function $ m\mapsto\inf_{\hat S_m}\Expect[\ell(\hat{S}_{m},S)] $ is decreasing.
    Therefore
    \begin{align}
      \inf_{\{\hat{S}_m\}}\Expect[\ell(\hat{S}_{n'},S)]
      &\ge \inf_{\hat{S}_n}\Expect[\ell(\hat{S}_{n},S)]\Prob[n'\le n] \nonumber \\
      &  \ge \inf_{\hat{S}_n}\Expect[\ell(\hat{S}_{n},S)](1-\exp(n(\beta+\log(1-\beta))))\nonumber\\
      &\ge \inf_{\hat{S}_n}\Expect[\ell(\hat{S}_{n},S)](1-\exp(-n\beta^2/2)),\label{eq:bayesian-ref2}
    \end{align}
    where we used the Chernoff bound (see, \eg, \cite[Theorem 5.4]{MU06}) and the fact that $ \log(1-x)\le -x-x^2/2 $ for $x>0$.
    Taking supremum over $ \pi $ on both sides of \prettyref{eq:bayesian-ref2}, the conclusion follows from \prettyref{eq:bayesian-ref1} and the minimax theorem (cf.~\eg \cite[Theorem 46.5]{Strasser85}).
\end{proof}

\begin{proof}[Proof of \prettyref{lmm:max-exp-cheby}]
    By assumption, $\alpha = \frac{L}{\beta}$ is strictly bounded away from zero.
    Let $ f(x)\triangleq e^{-\beta x} T_L(x)=e^{-\beta x}\cosh(L\arccosh(x)) $ when $ x\ge 1 $.
    By taking the derivative of $ f $, we obtain that $ f $ is decreasing if and only if 
    \[
        \frac{\tanh(L\arccosh(x))}{\sqrt{x^2-1}}=\frac{\tanh(Ly)}{\sinh(y)}<\frac{1}{\alpha},
    \]
    where $ x=\cosh(y) $.
    Let $ g(y)=\frac{\tanh(Ly)}{\sinh(y)} $.
    Note that $ g $ is strictly decreasing on $ \reals_+ $ with $ g(0)=L $ and $ g(\infty)=0 $.
    Therefore $ f $ attains its maximum at $x^*$ which is the unique solution of $ \frac{\tanh(L\arccosh(x))}{\sqrt{x^2-1}}=\frac{1}{\alpha} $.
    It is straightforward to verify that the solution satisfies $ x^*=\sqrt{1+\alpha^2}(1-o_{L}(1)) $ when $ \alpha $ is strictly bounded away from zero.
    Therefore the maximum value of $ f $ is
    \begin{equation*}
        e^{-\beta x^*}T_L(x^*)=e^{-L\sqrt{1+1/\alpha^{2}}(1-o_{L}(1))}\frac{1}{2}(z^L+z^{-L}),
    \end{equation*}
    where we used \prettyref{eq:cheby} and $ z=x^*+\sqrt{x^{*2}-1}=(\sqrt{1+\alpha^2}+\alpha)(1-o_{L}(1))$ is strictly bounded away from 1.
    This proves the lemma.
\end{proof}

\appendix
\section{Dual program of \eqref{EQ:FL}}
\label{app:opt-UX}
Define the following infinite-dimensional linear programming problem:
\begin{equation}
    \begin{aligned}
        \calE_1^*\triangleq
        \sup & ~ \prob{U'=0}-\prob{U=0}  \\
        \text{s.t.}     
        & ~ \expect{U} = \expect{U'}=1 \\
        & ~ \expect{U^j} = \expect{U'^j}, \quad j = 1,\ldots,L+1, \\
        & ~ U,U' \in \sth{0}\cup I,
    \end{aligned}
    \label{eq:FL1}
\end{equation}
where $ I=[a,b] $ with $ b>a\ge 1 $. Then \eqref{EQ:FL} is a special case of \prettyref{eq:FL1} with $ I=[1,\lambda] $. 
\begin{lemma}
    $\calE_1^* =\inf_{p\in\calP_L}\sup_{x\in I}\abs{\frac{1}{x}-p(x)}$.
    \label{lmm:FL}
\end{lemma}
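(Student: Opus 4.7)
The plan is to establish the equality by infinite-dimensional LP duality, combining a weak-duality upper bound with an extremal construction based on Chebyshev's alternation theorem. For the upper bound, given any $p \in \calP_L$, the polynomial $q(x) = 1 - x p(x)$ lies in $\calP_{L+1}$ with $q(0) = 1$, so the moment-matching constraints (orders $0$ through $L+1$) force $\int q \, d(\mu - \mu') = 0$. Isolating the atom at $0$ yields the key identity
\[
\Prob[U' = 0] - \Prob[U = 0] = \int_I (x p(x) - 1) \, d(\mu - \mu') = \int_I x\pth{p(x) - \tfrac{1}{x}} \, d(\mu - \mu').
\]
Since the unit-mean constraint gives $\int_I x \, d\mu = \int_I x \, d\mu' = 1$, the measures $x \, d\mu|_I$ and $x \, d\mu'|_I$ are probability measures on $I$, so the right-hand side is the difference of two expectations of $p - 1/x$ under probability measures on $I$. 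Bounding the right-hand side by the uniform deviation of $p - 1/x$ on $I$ (via an oscillation-centering argument) and optimizing over $p \in \calP_L$ delivers the upper bound for $\calE_1^*$.

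For the matching lower bound I would invoke Chebyshev's alternation theorem: the best approximation $p^* \in \calP_L$ to $1/x$ on $I = [a, b]$ equioscillates at $L + 2$ points $a = x_0 < x_1 < \cdots < x_{L+1} = b$ with $p^*(x_i) - 1/x_i = (-1)^i E_L^*$. I would construct extremal atomic measures $\mu$ supported on $\{0\} \cup \{x_i : i \text{ even}\}$ and $\mu'$ on $\{0\} \cup \{x_i : i \text{ odd}\}$; the weights at the $L+2$ non-zero atoms are uniquely determined by the $L+1$ moment-matching equations plus the unit-mean constraint, and the atoms at $0$ are then chosen so that both $\mu$ and $\mu'$ are probability measures. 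Substituting into the identity above and exploiting the alternating sign pattern of $p^* - 1/x$ on the $x_i$ makes the upper bound tight.

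The principal obstacle is verifying that the Chebyshev weights at the $x_i$ are non-negative and that $\mu(\{0\}), \mu'(\{0\}) \in [0, 1]$. This is a classical consequence of the Markov--Krein theory of canonical representations for the truncated Hausdorff moment problem on the Chebyshev system $\{1, x, \dots, x^{L+1}\}$ over $[a, b]$: the upper and lower principal representations of any admissible moment sequence are supported on interlacing subsets of a common $(L+2)$-point alternation set, with automatically non-negative weights. A cleaner, non-constructive alternative is to appeal directly to strong LP duality in the Banach space of signed Radon measures of bounded total variation on the compact set $\{0\} \cup I$ (via Hahn--Banach or a Sion-type minimax theorem on the compact moment feasibility set), which delivers both bounds simultaneously without explicit extremal measures.
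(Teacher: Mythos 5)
Your strategy is sound, and it is a genuinely different, more self-contained route than the paper's. The paper first performs the change of measure $P_X(\diff x)=xP_U(\diff x)$ to convert \prettyref{eq:FL1} into the standard moment-matching problem $\calE_2^*=\sup\{\Expect[1/X]-\Expect[1/X']\}$ over probability measures on $I$ with $L$ matched moments, and then simply cites the duality with best polynomial approximation from \cite[Appendix E]{WY14}. Your test polynomial $q(x)=1-xp(x)$ is precisely the dual incarnation of that change of measure, so your weak-duality step reproves the upper bound from scratch, and your alternation-point construction (with the Markov--Krein principal-representation argument for non-negativity of the quadrature weights, or the abstract strong-duality alternative) replaces the citation for the lower bound. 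Two small points: your key identity should read $\Prob[U'=0]-\Prob[U=0]=\int_I(1-xp(x))\,\diff(\mu-\mu')$ (you have the opposite sign, which is harmless once absolute values are taken), and the requirement $\mu(\{0\}),\mu'(\{0\})\in[0,1]$ is automatic, since $1=\Expect[U]\ge a\,\Prob[U>0]\ge\Prob[U>0]$.

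The one substantive issue is the constant. The right-hand side of your identity is a difference of expectations of $g=p-1/x$ under two probability measures on $I$, which is bounded by the oscillation $\sup_I g-\inf_I g$, not by $\sup_I|g|$; minimizing the oscillation over $p\in\calP_L$ gives $2E_L(1/x,I)$, and your alternation construction attains exactly this oscillation. So your argument, carried out correctly, proves $\calE_1^*=2\inf_{p\in\calP_L}\sup_{x\in I}|1/x-p(x)|$, not the displayed identity. That factor of $2$ is genuinely there: for $L=0$ and $I=[1,2]$ one checks directly that $\calE_1^*=1/2$ while $\inf_c\sup_{x\in[1,2]}|1/x-c|=1/4$. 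The discrepancy lies in the lemma as displayed (and in the last line of the paper's proof), not in your method -- the paper's own application in the proof of \prettyref{prop:main-lb2} uses $\Prob[U>0]-\Prob[U'>0]=2E_{L-1}(1/x,[1+\nu,\lambda])$, consistent with the factor $2$. In your write-up, state and track the constant explicitly rather than asserting that the oscillation bound "delivers the upper bound for $\calE_1^*$" as stated.
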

\begin{proof}
    We first show that \prettyref{EQ:FL} coincides with the following optimization problem:
    \begin{equation}
        \begin{aligned}
            \calE_2^*\triangleq
            \sup & ~ \expect{\frac{1}{X}}-\expect{\frac{1}{X'}}  \\
            \text{s.t.}     
            & ~ \expect{X^j} = \expect{X'^j}, \quad j = 1,\ldots,L, \\
            & ~ X,X' \in I.
        \end{aligned}
        \label{eq:FL2}
    \end{equation}
    Given any feasible solution $ U,U' $ to \prettyref{EQ:FL}, construct $ X,X' $ with the following distributions:
    \begin{equation}
        \begin{aligned}
            &P_X(\diff x)=xP_{U}(\diff x),\\
            &P_{X'}(\diff x)=xP_{U'}(\diff x),
        \end{aligned}
        \label{eq:U-X}
    \end{equation}
    It is straightforward to verify that $ X,X' $ are feasible for \prettyref{eq:FL2} and
    \begin{equation*}
        \calE_2^*
        \ge \expect{\frac{1}{X}}-\expect{\frac{1}{X'}}
        =\prob{U'=0}-\prob{U=0}.
    \end{equation*}
    Therefore $ \calE_2^*\ge \calE_1^* $.

    On the other hand, given any feasible $ X,X' $ for \prettyref{eq:FL2}, construct $ U,U' $ with the distributions:
    \begin{equation}
        \begin{aligned}
            &P_U(\diff u)=\pth{1-\expect{\frac{1}{X}}}\delta_0(\diff u)+\frac{1}{u}P_{X}(\diff u),\\
            &P_{U'}(\diff u)=\pth{1-\expect{\frac{1}{X'}}}\delta_0(\diff u)+\frac{1}{u}P_{X'}(\diff u),
        \end{aligned}
        \label{eq:X-U}
    \end{equation}
    which are well-defined since $ X,X'\ge 1 $ and hence $ \expect{\frac{1}{X}}\le 1, \expect{\frac{1}{X'}}\le 1 $.
    Then $ U,U' $ are feasible for \prettyref{EQ:FL} and hence 
    \begin{equation*}
        \calE_1^*
        \ge \prob{U'=0}-\prob{U=0}
        =\expect{\frac{1}{X}}-\expect{\frac{1}{X'}}.
    \end{equation*}
    Therefore $ \calE_1^*\ge \calE_2^* $. Finally, the dual of \prettyref{eq:FL2} is precisely the best polynomial approximation problem (see, \eg, \cite[Appendix E]{WY14}) and hence
    \begin{equation*}
        \calE_1^*=\calE_2^*=\inf_{p\in\calP_L}\sup_{x\in I}\abs{\frac{1}{x}-p(x)} . \qedhere
    \end{equation*}
\end{proof}

\section{Total variation between Poisson mixtures}
\label{app:tv-bound}    

        The total variation distance between two Poisson mixtures is obtained in the following lemma, which is an improvement of \cite[Lemma 3]{WY14} in terms of constants. This is crucial for our purposes of obtaining the best constants in the sample complexity bounds in \prettyref{eq:sample-complexity-asymp}.
        \begin{lemma}
            \label{lmm:tv-bound}
            Let $ V $ and $ V' $ be random variables taking values on $ [0,\Lambda] $.
            If $ \Expect[V^j]=\Expect[V'^j],~j=1,\dots,L $, then
            \begin{equation}
                \TV(\Expect[\Poi(V)],\Expect[\Poi(V')])
                \le \frac{(\Lambda/2)^{L+1}}{(L+1)!}\pth{2+2^{\Lambda/2-L}+2^{\Lambda/(2\log 2)-L}}.
                \label{eq:tv-bound}
            \end{equation}
            In particular, $ \TV(\Expect[\Poi(V)],\Expect[\Poi(V')])\le (\frac{e\Lambda}{2L})^{L} $. Moreover, if $ L>\frac{e}{2}\Lambda $, then
            \[
             \TV(\Expect[\Poi(V)],\Expect[\Poi(V')])\le \frac{2(\Lambda/2)^{L+1}}{(L+1)!}(1+o(1)), \quad \Lambda \diverge.
             \]
        \end{lemma}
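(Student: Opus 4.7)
The plan is to write
\begin{equation*}
2\,\TV(\Expect[\Poi(V)],\Expect[\Poi(V')])=\sum_{j\geq 0}|\Expect[\poi(V,j)]-\Expect[\poi(V',j)]|=\sum_{j\geq 0}\frac{|\Expect[V^je^{-V}]-\Expect[V'^je^{-V'}]|}{j!}
\end{equation*}
and bound each summand via polynomial approximation together with moment matching. Since $\Expect[p(V)]=\Expect[p(V')]$ for every polynomial $p$ of degree at most $L$, we may subtract any such $p$ before estimating, obtaining $|\Expect[V^je^{-V}]-\Expect[V'^je^{-V'}]|\leq\Expect|V^je^{-V}-p(V)|+\Expect|V'^je^{-V'}-p(V')|$ for any $p\in\calP_L$.

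For $j\leq L$ I would take $p_j(V)=V^jq_j(V)$, where $q_j$ is the Taylor polynomial of $e^{-V}$ about the midpoint $\Lambda/2$ truncated at degree $L-j$, so $p_j$ has degree at most $L$. The Lagrange remainder gives the pointwise bound
\begin{equation*}
|V^je^{-V}-V^jq_j(V)|\leq V^je^{-\min(V,\Lambda/2)}\,\frac{|V-\Lambda/2|^{L-j+1}}{(L-j+1)!}.
\end{equation*}
The technical heart of the argument is the binomial identity
\begin{equation*}
\sum_{j=0}^{L}\frac{V^j(\Lambda/2-V)^{L+1-j}}{j!(L-j+1)!}=\frac{(\Lambda/2)^{L+1}-V^{L+1}}{(L+1)!},
\end{equation*}
which follows from the binomial theorem and causes the $j$-wise remainders to telescope when summed. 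For $V\in[0,\Lambda/2]$ the pointwise sum therefore collapses into $e^{-V}((\Lambda/2)^{L+1}-V^{L+1})/(L+1)!\leq(\Lambda/2)^{L+1}/(L+1)!$; taking expectations over $V$ and $V'$ and adding gives the main contribution $2(\Lambda/2)^{L+1}/(L+1)!$, which accounts for the leading $2$ in the claimed bracketed factor.

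The remaining contributions require separate care. For $V\in[\Lambda/2,\Lambda]$ the same identity yields the pointwise bound $e^{-\Lambda/2}((2V-\Lambda/2)^{L+1}-V^{L+1})/(L+1)!$, largest at $V=\Lambda$ with value $e^{-\Lambda/2}(3^{L+1}-2^{L+1})(\Lambda/2)^{L+1}/(L+1)!$; rewriting the prefactor $3^{L+1}e^{-\Lambda/2}$ in base $2$ produces a correction of the form $2^{\Lambda/(2\log 2)-L}$. For the tail $j>L$, where moment matching is vacuous, I would apply the triangle inequality $|\Expect[\poi(V,j)]-\Expect[\poi(V',j)]|\leq\Expect[\poi(V,j)]+\Expect[\poi(V',j)]$ together with the Chernoff-type Poisson tail estimate $\sum_{j>L}\poi(\Lambda,j)$; after comparison to $(\Lambda/2)^{L+1}/(L+1)!$ this yields the remaining correction $2^{\Lambda/2-L}$. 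Assembling these three contributions gives the stated bound, and the two \emph{in particular} statements follow from Stirling's formula applied to $(\Lambda/2)^{L+1}/(L+1)!$ (first clause) and from observing that both correction exponents $\Lambda/2-L$ and $\Lambda/(2\log 2)-L$ are strictly negative and diverge to $-\infty$ whenever $L>e\Lambda/2$ (second clause).

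The main obstacle is obtaining precisely the claimed constants. The naive Taylor-expansion argument of [WY14, Lemma 3] does not exploit the telescoping identity above: summing the remainders termwise with the crude bound $V^j\leq\Lambda^j$ gives $\sum_j\binom{L+1}{j}2^j=3^{L+1}-2^{L+1}$, which loses a factor of order $3^{L+1}$ and yields only the weaker $(e\Lambda/(2L))^L$ bound. The refined bound here therefore requires both (i) the telescoping identity, which recovers the correct $(\Lambda/2)^{L+1}/(L+1)!$ scaling via pointwise cancellation between $V^j$ and $(\Lambda/2-V)^{L+1-j}$, and (ii) careful dyadic bookkeeping in the two residual regimes ($V>\Lambda/2$ and $j>L$) so that the two leftover exponentials combine into the specific form $2^{\Lambda/2-L}+2^{\Lambda/(2\log 2)-L}$ rather than cruder exponentials.
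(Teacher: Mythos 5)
Your overall frame (expand the total variation over the Poisson pmf indices $j$, use the matched moments to subtract a degree-$L$ polynomial from each $x\mapsto x^je^{-x}$, and control the resulting approximation errors) is the same as the paper's, and your telescoping identity does correctly give the clean bound $e^{-V}((\Lambda/2)^{L+1}-V^{L+1})/(L+1)!$ on the half-interval $V\in[0,\Lambda/2]$. But there is a genuine gap on the other half. For $V\in(\Lambda/2,\Lambda]$ your own computation yields the factor $e^{-\Lambda/2}\bigl((2V-\Lambda/2)^{L+1}-V^{L+1}\bigr)/(L+1)!$, which at $V=\Lambda$ is $(3^{L+1}-2^{L+1})e^{-\Lambda/2}(\Lambda/2)^{L+1}/(L+1)!$; the claim that the prefactor $3^{L+1}e^{-\Lambda/2}$ "rewritten in base $2$" becomes $2^{\Lambda/(2\log 2)-L}=e^{\Lambda/2}2^{-L}$ is not correct arithmetic — one quantity grows like $3^{L}$ in $L$ while the other decays like $2^{-L}$. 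In the regime where the lemma is used, $L>\tfrac{e}{2}\Lambda$, we have $3^{L+1}e^{-\Lambda/2}\geq e^{(e\log 3-1)\Lambda/2}\to\infty$, so your bound on this half-interval exceeds the main term by an exponentially large factor; this is exactly the $3^{L+1}$ loss you correctly diagnose in the naive argument, reappearing on half the domain. The Taylor-about-midpoint construction cannot be patched here, because for $V$ near $\Lambda$ the product $V^j|V-\Lambda/2|^{L+1-j}$ genuinely sums to $\approx(3\Lambda/2)^{L+1}$. Your tail treatment for $j>L$ has a similar (milder) problem: the crude bound $\Prob[\Poi(\Lambda)>L]\approx e^{-\Lambda}\Lambda^{L+1}/(L+1)!$ is $2^{L+1}e^{-\Lambda}$ times the main term, which is not $2^{\Lambda/2-L}$ and is not even $o(1)$ once $L>\Lambda/\log 2$. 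Moment matching is not "vacuous" for $j>L$: one may still subtract the best degree-$L$ approximant of $f_j$, and since $f_j$ peaks at $x=j>\Lambda$ it is uniformly tiny on $[0,\Lambda]$, which is what makes the tail contribution as small as claimed.

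The paper's proof avoids both issues by bounding every term, for all $j\geq 0$, by twice the uniform approximation error $E_L(f_j,[0,\Lambda])$ and estimating the latter via Chebyshev-node interpolation: $E_L(f_j,[0,\Lambda])\leq\frac{(\Lambda/2)^{L+1}}{2^L(L+1)!}\max_{[0,\Lambda]}|f_j^{(L+1)}|$. The extra $2^{-L}$ from the Chebyshev nodes (unavailable to a Taylor expansion about a single point) is what absorbs the combinatorial growth: using Laguerre-polynomial identities one gets $\max|f_j^{(L+1)}|\leq\binom{L+1}{j}$ for $j\leq L+1$, whose sum $2^{L+1}$ cancels the $2^{-L}$ and produces the leading constant $2$; the ranges $L+2\leq j<L+1+\Lambda/2$ and $j\geq L+1+\Lambda/2$ produce the corrections $2^{\Lambda/2-L}$ and $e^{\Lambda/2}2^{-L}=2^{\Lambda/(2\log 2)-L}$ respectively. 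So the two correction terms come from the large-$j$ tail of the sum, not from the $V>\Lambda/2$ part of the state space. To repair your argument you would need to replace the midpoint Taylor polynomial by an approximant whose error is uniformly $O\bigl((\Lambda/4)^{L+1-j}\bigr)$ over all of $[0,\Lambda]$ (interpolation at Chebyshev nodes, or the best uniform approximant), and to treat $j>L$ by polynomial approximation of $f_j$ rather than by the triangle inequality.
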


\begin{proof}
Denote the best degree-$L$ polynomial approximation error of a function $f$ on an interval $I$ by
\[
E_L(f,I) = \inf_{p \in \calP_L} \sup_{x \in I} |f(x)-p(x)|.
\]  
    Let
    \begin{equation}
        f_j(x)\triangleq \frac{e^{-x}x^j}{j!}.
        \label{eq:fj}
    \end{equation}
    Let $ P_{L,j}^* $ be the best polynomial of degree $ L $ that uniformly approximates $ f_j $ over the interval $ [0,\Lambda] $ and the corresponding approximation error by $ E_L(f_j,[0,\Lambda])=\max_{x\in[0,\Lambda]}|f_j(x)-P_{L,j}^*(x)| $. 
    Then $\Expect P_{L,j}^*(V) = \Expect P_{L,j}^*(V') $ and hence
    \begin{align}
      \TV(\Expect[\Poi(V)],\Expect[\Poi(V')])        =&~  \frac{1}{2}\sum_{j=0}^{\infty}|\Expect f_j(V)-\Expect f_j(V')|\nonumber\\
      \le &~\frac{1}{2}\sum_{j=0}^{\infty}|\Expect (f_j(V)-P_{L,j}^*(V))|+|\Expect (f_j(V')-P_{L,j}^*(V'))| \nonumber \\
      \le &~ \sum_{j=0}^{\infty} E_L(f_j,[0,\Lambda]).\label{eq:tv-ub-approx}
    \end{align}

    A useful upper bound on the degree-$L$ best polynomial approximation error of a function $f$ is via the Chebyshev interpolation polynomial, whose uniform approximation error can be bounded using the $L\Th$ derivative of $f$. Specifically, we have (cf.~\eg, \cite[Eq.~(4.7.28)]{Atkinson89})
    \begin{equation}
        E_L(f,[0,\Lambda])
        \le \max_{x\in[0,\Lambda]}\abs{f_j(x)-Q_L(f;x)}
        \le \frac{1}{2^L(L+1)!}\pth{\frac{\Lambda}{2}}^{L+1}\max_{x\in [0,\Lambda]}\abs{f^{(L+1)}(x)},
        \label{eq:approx-ub-diff}
    \end{equation}
    where $ Q_L(f;x) $ denotes the degree-$ L $ interpolating polynomial for $f$ on Chebyshev nodes (roots of the Chebyshev polynomial). To apply \prettyref{eq:approx-ub-diff} to $f=f_j$ defined in \prettyref{eq:fj}, note that
    $ f_j^{(L+1)} $ can be conveniently expressed in terms of Laguerre polynomials: 
    Denote the degree-$ n $ generalized Laguerre polynomial by $ L_n^{(k)} $ and the simple Laguerre polynomial by $ L_n(x)=L_n^{(0)} $.
    Recall the Rodrigues representation for Laguerre polynomials:
    \[
        L_n^{(k)}(x)=\frac{x^{-k}e^x}{n!}\fracdk{}{x}{n}(e^{-x}x^{n+k})=(-1)^k\fracdk{}{k}{x}L_{n+k}(x), \quad k\in\naturals .
    \]

    If $ j\le L+1 $, 
    \begin{equation*}
        f_j^{(L+1)}(x)
        =\fracdk{}{x}{L+1-j}\pth{\fracdk{}{x}{j}\frac{e^{-x}x^j}{j!}}
        =\fracdk{}{x}{L+1-j}(L_j(x)e^{-x}).
    \end{equation*}
    Note that $ L_j $ is a degree-$ j $ polynomial, whose derivative of order higher than $ j $ is zero.
    Applying general Leibniz rule for derivatives yields that
    \begin{align}
      f_j^{(L+1)}(x)
      =~&\sum_{m=0}^{(L+1-j)\wedge j}\binom{L+1-j}{m} \fracdk{L_j(x)}{x}{m} e^{-x}(-1)^{L+1-j-m}\nonumber\\
      =~&(-1)^{L+1-j}e^{-x}\sum_{m=0}^{(L+1-j)\wedge j}\binom{L+1-j}{m}L_{j-m}^{(m)}(x).
          \label{eq:fj-diff}
    \end{align}
    Applying  \cite[22.14.13]{AS64}
\begin{equation}
|L_n^{(k)}(x)|\le \binom{n+k}{n}e^{x/2}         
        \label{eq:AS64}
\end{equation}
when $ x\ge 0 $ and $ k\in \naturals $, we obtain that
    \begin{equation*}
        \abs{f_j^{(L+1)}(x)}
        \le e^{-x}\sum_{m=0}^{(L+1-j)\wedge j}\binom{L+1-j}{m}\binom{j}{j-m}e^{x/2}
        = e^{-x/2}\binom{L+1}{j}.
    \end{equation*}
    Therefore $ \max_{x\in [0,\Lambda]}|f_j^{(L+1)}(x)|\le \binom{L+1}{j} $ when $ j\le L+1 $.\footnote{This is in fact an equality. In view of \prettyref{eq:fj-diff} and the fact that 
    \cite[22.3]{AS64}, we have $ |f_j^{(L+1)}(0)|=\sum_{m}\binom{L+1-j}{m}\binom{j}{j-m}=\binom{L+1}{j} $.}
    Then, applying \prettyref{eq:approx-ub-diff}, we have
    \begin{equation}
        \sum_{j=0}^{L+1} E_L(f_j,[0,\Lambda])
        \le \sum_{j=0}^{L+1}\frac{\binom{L+1}{j}(\Lambda/2)^{L+1}}{2^L(L+1)!}
        = \frac{2(\Lambda/2)^{L+1}}{(L+1)!}.
        \label{eq:approx-bd-jsmall}
    \end{equation}

    If $ j\ge L+2 $, the derivatives of $ f_j $ are related to the Laguerre polynomial by
    \begin{equation*}
        f_j^{(L+1)}(x)=\frac{(L+1)!}{j!}x^{j-L-1}e^{-x}L_{L+1}^{(j-L-1)}(x).
    \end{equation*}
    Again applying \prettyref{eq:AS64} when $ x\ge 0 $ and $ k\in \naturals $, we obtain 
    \begin{equation*}
        \abs{f_j^{(L+1)}(x)}
        \le \frac{(L+1)!}{j!}x^{j-L-1}e^{-x}\binom{j}{L+1}e^{x/2}
        = \frac{1}{(j-L-1)!}e^{-x/2}x^{j-L-1},
    \end{equation*}
    where the maximum of right-hand side on $[0,\Lambda]$ occurs at $ x=(2(j-L-1))\wedge \Lambda $.
    Therefore
    \begin{equation*}
        \max_{x\in[0,\Lambda]}|f_j^{(L+1)}(x)|
        \le
        \begin{cases}
            \frac{1}{(j-L-1)!}\pth{\frac{2(j-L-1)}{e}}^{j-L-1},& L+1\le j\le L+1+\Lambda/2,\\
            \frac{1}{(j-L-1)!}e^{-\Lambda/2}\Lambda^{j-L-1},& j\ge L+1+\Lambda/2.
        \end{cases}
    \end{equation*}
    Then, applying \prettyref{eq:approx-ub-diff} and Stirling's approximation that $ (\frac{j-L-1}{e})^{j-L-1}<\frac{(j-L-1)!}{\sqrt{2\pi(j-L-1)}} $, we have
    \begin{align}
      \sum_{\substack{j\ge L+2\\ j<L+1+\Lambda/2}}E_L(f_j,[0,\Lambda])
   & \le \frac{(\Lambda/2)^{L+1}}{2^L(L+1)!}\sum_{\substack{j\ge L+2\\ j<L+1+\Lambda/2}}\frac{2^{j-L-1}}{\sqrt{2\pi(j-L-1)}}
      \le \frac{(\Lambda/2)^{L+1}2^{\Lambda/2}}{2^L(L+1)!},
      \label{eq:approx-bd-jmid}\\
      \sum_{j\ge L+1+\Lambda/2}E_L(f_j,[0,\Lambda])
   & \le \frac{(\Lambda/2)^{L+1}e^{-\Lambda/2}}{2^{L}(L+1)!}\sum_{j\ge L+1+\Lambda/2}\frac{\Lambda^{j-L-1}}{(j-L-1)!}
     \le \frac{(\Lambda/2)^{L+1}e^{\Lambda/2}}{2^{L}(L+1)!}.
     \label{eq:approx-bd-jbig}
    \end{align}
    Assembling the three ranges of summations in \prettyref{eq:approx-bd-jsmall}-\prettyref{eq:approx-bd-jbig} in the total variation bound \prettyref{eq:tv-ub-approx}, we obtain
    \begin{equation*}
        \TV(\Expect[\Poi(V)],\Expect[\Poi(V')])
        \le \frac{(\Lambda/2)^{L+1}}{(L+1)!}\pth{2+2^{\Lambda/2-L}+2^{\Lambda/(2\log 2)-L}}.
    \end{equation*}
    
    Finally, applying Stirling's approximation $ (L+1)!>\sqrt{2\pi (L+1)}(\frac{L+1}{e})^{L+1} $, we conclude $ \TV(\Expect[\Poi(V)],\Expect[\Poi(V')])\le (\frac{e\Lambda}{2L})^{L} $.
    If $ L>\frac{e}{2}\Lambda>\frac{\Lambda}{2\log 2}>\frac{\Lambda}{2} $, then $ 2^{\Lambda/2-L}+2^{\Lambda/(2\log 2)-L}=o(1) $.    
\end{proof}


\section*{Acknowledgment}
This work was completed in part when Y.W.~was visiting the Simons Institute for the Theory
of Computing, whose generous support is acknowledged.
The authors thank Luca Trevisan for helpful comments pertaining to \prettyref{thm:sample}.
The authors are grateful to Dan Roth and Mark Sammons for help with the datasets used in \prettyref{fig:NYT}.


\end{document}